\numberwithin{equation}{section}
\definecolor{Mydarkred}{rgb}{0.828,0,0}
\definecolor{Mydarkblue}{rgb}{0,0.16,0.68}
\newtheorem{Theorem}{Theorem}[section]
\newtheorem{Remark}{Remark}[section]
\newdimen\plusheight
\newcommand\+{\;\lower\plusheight\hbox{$+$}\;}
\newdimen\minusheight
\renewcommand\-{\;\lower\minusheight\hbox{$-$}\;}
\newdimen\cdotsheight
\newcommand\cds{\lower\cdotsheight\hbox{$\cdots$}}
\begin{document}
\title{Wronskians of theta functions and series for $1/\pi$}

\subjclass[2010]{Primary 11F11; Secondary 11F20, 11F27, 11Y60}

\keywords{Ramanujan--Borweins series, theta functions, Dedekind eta function,
modular equations, Wronskians}

\author[A.\ Berkovich]{Alex Berkovich$^*$}
\address{Department of Mathematics, University of Florida,
358 Little Hall, Gainesville, FL 32611, USA}
\email{alexb@ufl.edu}
\thanks{$^*$Partly supported by the Simons Foundation, Award ID: 308929.}

\author[H.H.\ Chan]{Heng Huat Chan}
\address
{Main address: Department of Mathematics, National University of Singapore,
Block S17, 10 Lower Kent Ridge Road,
Singapore 119076, Singapore}
\address{Temporary address: Fakult\"at f\"ur Mathematik, Universit\"at Wien,
Oskar-Morgenstern-Platz~1, A-1090 Vienna, Austria}
\email{matchh@nus.edu.sg}

\author[M.J.\ Schlosser]{Michael J.\ Schlosser$^{**}$}
\address{Fakult\"at f\"ur Mathematik, Universit\"at Wien,
Oskar-Morgenstern-Platz~1, A-1090 Vienna, Austria}
\email{michael.schlosser@univie.ac.at}
\thanks{$^{**}$Partly supported by FWF Austrian Science Fund
grant F50-08 within the SFB
``Algorithmic and enumerative combinatorics''.}


\dedicatory{Dedicated to the memory of Professor J.M.\ Borwein}
\begin{abstract}
In this article, we define functions analogous to Ramanujan's function
$f(n)$ defined in his famous paper
``Modular equations and approximations to $\pi$''.
We then use these new functions to study Ramanujan's
series for $1/\pi$ associated with the classical,
cubic and quartic bases.  
\end{abstract}

\maketitle
\section{Introduction}

Let $q=e^{\pi i \tau}$ with $\text{Im}\, \tau>0$ and let
$$\vartheta_2(q) =\sum_{k=-\infty}^\infty q^{(k+1/2)^2},\quad
\vartheta_3(q) =\sum_{k=-\infty}^\infty q^{k^2},
\quad\text{and}\quad
\vartheta_4(q) =\sum_{k=-\infty}^\infty (-1)^k q^{k^2}.$$
Further, let
\begin{align}
P(q) &= 1-24\sum_{k=1}^\infty \frac{kq^k}{1-q^k}\notag\intertext{and}
\alpha(q) &= \frac{\vartheta_2^4(q)}{\vartheta_3^4(q)}.\label{alphac}
\end{align}

In his paper ``Modular equations and approximations to $\pi$'',
S.~Ramanujan gave a table \cite[Table III]{Ram-mod} expressing the function
\begin{equation}\label{Ram-F}
f(\ell) := \frac{\ell P(q^{2\ell})-P(q^2)}{\vartheta_3^2(q)\vartheta_3^2(q^\ell)}
\end{equation}
in terms of
$\alpha(q)$ and $\alpha(q^\ell)$
for $\ell = 2,3,4,5,7,11,15,17,19,23,31$ and 35.
(To be exact, Ramanujan actually defined $f$ as $f(l)=\ell  P(q^{2\ell})-P(q^2)$,
i.e., without the denominator in \eqref{Ram-F}. We have 
modified Ramanujan's
function for simplicity of the entries in the table.)
Examples of such relations are
\begin{align*}
f(3) &= 1+\sqrt{\alpha(q)\alpha(q^3)}+\sqrt{(1-\alpha(q))(1-\alpha(q^3))},
\intertext{and}
f(7) &=3\left(1+\sqrt{\alpha(q)\alpha(q^7)}+
\sqrt{(1-\alpha(q))(1-\alpha(q^7))}\right).
\end{align*}
Unfortunately, Ramanujan did not provide any proofs of these identities.
Ramanujan's table for $f(\ell)$
was reproduced by J.M.~Borwein and P.B.~Borwein in their book
``Pi and the AGM'' \cite[p.~159, Table~5.1]{PiandAGM}.
The Borweins remarked that ``{\it The verification that $f(\ell)$
has the given form is tedious but straightforward for small $\ell$.
For larger $\ell$, we rely on Ramanujan.}''
This remark added more mysteries to Ramanujan's table of identities
for $f(\ell)$.

In the paragraph after Table III of \cite{Ram-mod},
Ramanujan outlined  the relation of these identities with his series
for $1/\pi$ \cite[Section 13]{Ram-mod}.
A more detailed explanation of Ramanujan's method of deriving series for
$1/\pi$ using $f(\ell)$ was first made available by the Borweins in their
book~\cite[Chapter 5]{PiandAGM}.

Let $$\eta(\tau) = q^{1/12}\prod_{k=1}^\infty \left(1-q^{2k}\right)$$
be the 
 Dedekind $\eta$-function.
It is immediate that
$$P(q^2) = 12q\frac{d \eta(\tau)}{dq}\cdot \frac{1}{\eta(\tau)},$$
and we can rewrite $f(\ell)$ in terms of the Wronskian of $\eta(\tau)$
and $\eta(\ell\tau)$ as follows:
\begin{equation}\label{f-eta}
f(\ell)=
\frac{12}{\eta(\tau)\eta(\ell\tau)\vartheta_3^2(q)\vartheta_3^2(q^\ell)}\,
\operatorname{det}\!\begin{pmatrix} \eta(\tau) & \eta(\ell\tau) \\
q\dfrac{d\eta(\tau)}{dq} & q\dfrac{d\eta(\ell\tau)}{dq} \end{pmatrix}.
\end{equation}

In this article, we define analogues of $f(\ell)$ by replacing
the Wronskian involving $\eta(\tau)$ in \eqref{f-eta}
by Wronskians of various theta functions.
For example, associated with the classical Jacobi theta functions,
we define the function
$$
D_\ell(q) = \frac{1}{\vartheta_3^3(q)\vartheta_3^3(q^\ell)}\,
\operatorname{det}\!\begin{pmatrix} \vartheta_3(q) & \vartheta_3(q^\ell) \\
q\dfrac{d\vartheta_3(q)}{dq} & q\dfrac{d\vartheta_3(q^\ell)}{dq} \end{pmatrix}.
$$
The relation of $D_\ell(q)$ with the series for $1/\pi$ is illustrated in the
following theorem: 
\begin{Theorem}\label{DLpi}
Let $N>2 $ be an integer and let
\begin{equation}\label{alphaN}
\alpha_N = \alpha\!\left(e^{-\pi\sqrt{N}}\right),
\end{equation}
where $\alpha(q)$ is given by \eqref{alphac}. Then
\begin{equation}\label{new-pi-expr}
\frac{1}{\sqrt{N}\pi}=\sum_{k=0}^\infty
\frac{\left(\frac{1}{2}\right)_k^3}{(k!)^3}
\left(k\left(1-2\alpha_N\right)-\frac{2}{\sqrt{N}}{D_N}\!\left(e^{-\pi/\sqrt{N}}
\right)\right)
\left(4\alpha_N(1-\alpha_N)\right)^k.\end{equation}
\end{Theorem}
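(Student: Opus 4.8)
The plan is to rewrite the summand of \eqref{new-pi-expr} in closed form using classical facts about $\vartheta_3$ and its logarithmic derivative, and then to collapse the resulting sum by applying Jacobi's imaginary transformation twice. Put $\theta=q\,\frac{d}{dq}$, let $g(q)=\dfrac{\theta\vartheta_3(q)}{\vartheta_3(q)}$ and $c_k=\dfrac{\left(\frac12\right)_k^3}{(k!)^3}$, and set $q_0=e^{-\pi/\sqrt N}$, $Q_0=q_0^{\,N}=e^{-\pi\sqrt N}$ and $z=4\alpha_N(1-\alpha_N)$. Writing $q=e^{\pi i\tau}$, the arguments $q_0$ and $Q_0$ correspond to $\tau=i/\sqrt N$ and to $-1/\tau$, respectively, so Jacobi's imaginary transformation of the theta functions gives $\alpha(Q_0)=\alpha_N$, $\alpha(q_0)=1-\alpha_N$, and $\vartheta_3^2(q_0)=\sqrt N\,\vartheta_3^2(Q_0)$.

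The main analytic ingredient is the Clausen-type expansion
\[
\vartheta_3^4(q)=\sum_{k=0}^\infty c_k\bigl(4\alpha(q)(1-\alpha(q))\bigr)^k,
\]
which follows from the classical identity $\vartheta_3^2(q)={}_2F_1\!\left(\tfrac12,\tfrac12;1;\alpha(q)\right)$ combined with a quadratic transformation and Clausen's formula. As an identity of real-analytic functions of $q$ it holds for $0<q\le e^{-\pi}$ (equivalently $\alpha(q)\le\tfrac12$); this range contains $Q_0$ (since $N>2$) but \emph{not} $q_0$. Evaluating at $q=Q_0$ gives $\sum_k c_k z^k=\vartheta_3^4(Q_0)$. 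Applying $\theta$ to the expansion on the interval $(0,e^{-\pi})$ and using the classical relation $\theta\alpha=\alpha(1-\alpha)\,\vartheta_3^4(q)$ yields
\[
\sum_{k=0}^\infty c_k\,k\,\bigl(4\alpha(q)(1-\alpha(q))\bigr)^k
=\frac{\theta\vartheta_3^4}{(1-2\alpha)\,\vartheta_3^4}
=\frac{4\,g(q)}{1-2\alpha},
\]
so at $q=Q_0$ one has $(1-2\alpha_N)\sum_k c_k\,k\,z^k=4\,g(Q_0)$.

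Substituting these two evaluations into the sum $S$ on the right of \eqref{new-pi-expr} and separating the contributions of $k(1-2\alpha_N)$ and of $-\tfrac{2}{\sqrt N}D_N(q_0)$ gives
\[
S=4\,g(Q_0)-\frac{2}{\sqrt N}\,D_N(q_0)\,\vartheta_3^4(Q_0).
\]
Since $q\,\frac{d}{dq}\vartheta_3(q^N)=N\,(\theta\vartheta_3)(q^N)$, the definition of $D_N$ gives $D_N(q_0)=\dfrac{N\,g(Q_0)-g(q_0)}{\vartheta_3^2(q_0)\,\vartheta_3^2(Q_0)}$, and inserting $\vartheta_3^2(q_0)=\sqrt N\,\vartheta_3^2(Q_0)$ collapses the middle term to $2\,g(Q_0)-\tfrac{2}{N}\,g(q_0)$, whence $S=2\,g(Q_0)+\tfrac{2}{N}\,g(q_0)$. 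Finally, logarithmically differentiating Jacobi's identity $\vartheta_3^2(e^{\pi i\tau})=(-i\tau)^{-1}\vartheta_3^2(e^{-\pi i/\tau})$ produces the quasi-modular transformation
\[
g\bigl(e^{-\pi i/\tau}\bigr)=\tau^2\,g\bigl(e^{\pi i\tau}\bigr)+\frac{\tau}{2\pi i},
\]
which at $\tau=i/\sqrt N$ (so that $e^{\pi i\tau}=q_0$ and $e^{-\pi i/\tau}=Q_0$) reads $g(Q_0)=\dfrac{1}{2\pi\sqrt N}-\dfrac{1}{N}\,g(q_0)$. Substituting, the $g(q_0)$ terms cancel and $S=\dfrac{1}{\pi\sqrt N}$, which is \eqref{new-pi-expr}.

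The point demanding the most care is the validity range of the Clausen-type expansion: the series $\sum_k c_k(4\alpha(1-\alpha))^k$ represents $\vartheta_3^4$ at $Q_0=e^{-\pi\sqrt N}$ but not at $q_0=e^{-\pi/\sqrt N}$, since at $q_0$ one has $\alpha>\tfrac12$ and has passed the branch point of ${}_3F_2\!\left(\tfrac12,\tfrac12,\tfrac12;1,1;\cdot\right)$ at argument $1$. The computation therefore genuinely takes place at the two conjugate arguments $q_0$ and $Q_0$, and it is precisely the pair of transformation laws — for $\vartheta_3^2$ and for its logarithmic $\theta$-derivative $g$ — that ties them together and yields the cancellation. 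The remaining points (justifying term-by-term differentiation of the series on $(0,e^{-\pi})$, and the elementary evaluation of the transformation factors at $\tau=i/\sqrt N$) are routine.
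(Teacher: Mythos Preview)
Your argument is correct and complete. The computation $S=2g(Q_0)+\tfrac{2}{N}g(q_0)$ followed by the quasi-modular transformation $g(Q_0)=\tfrac{1}{2\pi\sqrt N}-\tfrac{1}{N}g(q_0)$ gives exactly the cancellation needed, and your care about where the Clausen expansion is valid (at $Q_0$ but not at $q_0$) is well placed.

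The route, however, differs from the paper's. The paper applies a general ``black-box'' series machinery (Theorem~\ref{CCL-main}, quoted from \cite{Chan-Chan-Liu}) with the specific choices $Z=\vartheta_3^4$, $X=4\alpha(1-\alpha)$, $U=1-2\alpha$, and then identifies the constant term $a_N$ with $-\tfrac{2}{\sqrt N}D_N(e^{-\pi/\sqrt N})$ by logarithmically differentiating the multiplier $M_N=Z(q)/Z(q^N)$. Your proof instead evaluates the sum directly: you unpack the Clausen expansion and its $\theta$-derivative at $Q_0$, rewrite $D_N(q_0)$ in terms of $g$, and then let the quasi-modular defect of $g$ produce the $1/(\pi\sqrt N)$. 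In effect you are proving the relevant special case of the Chan--Chan--Liu theorem from scratch, with the advantage of being fully self-contained and making the origin of the $1/\pi$ (the weight-two anomaly $\tau/(2\pi i)$) completely transparent. The paper's approach, by contrast, buys uniformity: once Theorem~\ref{CCL-main} (and its extension Theorem~\ref{new-General}) is in hand, the cubic and quartic analogues in Sections~5--6 follow by the same template without redoing the transformation-law analysis each time.
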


Comparing \eqref{new-pi-expr} with the following simplified version
of the Borweins' series (see (5.5.13) of
\cite{PiandAGM})
$$\frac{1}{\sqrt{N}\pi}=
\sum_{k=0}^\infty
\frac{\left(\frac{1}{2}\right)_k^3}{(k!)^3}\left(k\left(1-2\alpha_N\right)+
\dfrac{1-2\alpha_N}{3}-\frac{\sigma(N)}{6\sqrt{N}}\right)
\left(4\alpha_N(1-\alpha_N)\right)^k,$$
where  $$\sigma(N) = f(N)\bigg|_{q=e^{-\pi/\sqrt{N}}},$$
we conclude that
\eqref{new-pi-expr}
is perhaps the simplest form of
Ramanujan's series for $1/\pi$ associated with the ``classical base''.
Using \eqref{new-pi-expr}, we can derive a series for $1/\pi$ whenever
$\alpha_N$ {and}
$-{D_N}\!\left(e^{-\pi/\sqrt{N}}\right)$ are known.
For example, we will show using modular equations satisfied by
$\alpha(q), \alpha(q^{13})$ and $D_{13}(q)$
that
\begin{align*}
\alpha_{13} &= \frac{1}{2}-3\sqrt{-18+5\sqrt{13}},\\
\intertext{and}
-D_{13}\!\left(e^{-\pi/\sqrt{13}}\right)&=
\frac{(-7+3\sqrt{13})\sqrt{-18+5\sqrt{13}}}{4}.
\end{align*}
The series corresponding to $N=13$ is then given by
\begin{equation}\label{Classic-13}\frac{1}
{\big(6\sqrt{13}\sqrt{-18+5\sqrt{13}}\big)\pi}=
\sum_{k=0}^\infty\frac{\left(\frac{1}{2}\right)_k^3}{(k!)^3}
\left(k+\frac{1}{4}-\frac{7}{156}\sqrt{13}\right)
\left(649-180\sqrt{13}\right)^k.\end{equation}
The identity \eqref{Classic-13} was implicitly given by the
Borweins~\cite[p.~172, Table~5.2a]{PiandAGM}\footnote{Tables~5.2a and 5.2b
on page 172 of \cite{PiandAGM} list certain quantities
which are used in formulas for $1/\pi$ given by the Borweins in their book
as (5.5.13) and (5.5.14), respectively.
We should warn the reader that our notation is different from
that used by the Borweins. In particular, the Borweins'
$\lambda^\star(r)$ and $\lambda^{\star\prime}(r)$ translate in our notation to
$\alpha(r)$ and  $\sqrt{1-\alpha(r)}$, respectively,
while the Borweins' $\alpha(r)$ can be expressed as
$\big(\pi^{-1}-4\sqrt{r}(q\frac d{dq}(\log(\vartheta_4(q)))\big)/\vartheta_3^4(q)$
evaluated at $q=e^{-\pi\sqrt{r}}$ (see \cite[(5.1.10)]{PiandAGM}).}
but since
Ramanujan did not provide an expression for $f(13)$, the Borweins
probably arrived at the series without using any specific identity
associated with $f(13)$.

The article is organized as follows:
In Section 2, we use the general series  found by H.H.~Chan, S.H. Chan and
Z.-G.~Liu \cite[Theorem~2.1]{Chan-Chan-Liu}
to prove Theorem \ref{DLpi}.
We then state a result that is an extension of
\cite[Theorem~2.1]{Chan-Chan-Liu}
and use it to derive the following analogue of  \eqref{new-pi-expr}:
\begin{Theorem}\label{new-Borweins-pi} Let $N>1$ be a positive integer
and $\alpha_N$ be given as in \eqref{alphaN}. Then
\begin{equation}\label{newBpi}\frac{1}{\sqrt{N}\pi}
=\sum_{k=0}^\infty\frac{\left(\frac{1}{2}\right)_k^3}{(k!)^3}
\left(\frac{1+\alpha_N}{1-\alpha_N}k+\widehat{a}_N\right)
\left(-4\frac{\alpha_N}{(1-\alpha_N)^2}\right)^k,
\end{equation}
where
\begin{equation}\label{Dhat}\widehat{D}_\ell(q)
=\frac{1}{\vartheta_4^3(q)\vartheta_4^3(q^{\ell})}\,
\operatorname{det}\!\begin{pmatrix} \vartheta_4(q) & \vartheta_4(q^{\ell}) \\
q\dfrac{d\vartheta_4(q)}{dq} & q\dfrac{d\vartheta_4(q^{\ell})}{dq}
\end{pmatrix}\end{equation}
and
$$\widehat{a}_N=-\frac{2}{\sqrt{N}}\sqrt{\frac{\alpha_N}{1-\alpha_N}}
\widehat{D}_N\!\left(e^{-\pi/\sqrt{N}}\right)+
\frac{1}{2(1-\alpha_N)}.$$
\end{Theorem}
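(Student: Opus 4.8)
The plan is to follow the proof of Theorem~\ref{DLpi}, with the ``classical base'' replaced by the ``$\vartheta_4$-base''. The first ingredient is the analogue of the expansion $\vartheta_3^4(q)=\sum_{k\ge 0}\frac{(1/2)_k^3}{(k!)^3}\bigl(4\alpha(q)(1-\alpha(q))\bigr)^k$ underlying Theorem~\ref{DLpi}, namely
\begin{equation*}
\vartheta_4^4(q)=\sum_{k=0}^\infty\frac{\bigl(\tfrac12\bigr)_k^3}{(k!)^3}
\left(-\frac{4\alpha(q)}{(1-\alpha(q))^2}\right)^k.
\end{equation*}
I would obtain this by chaining three classical hypergeometric transformations, starting from $\vartheta_3^2(q)={}_2F_1\bigl(\tfrac12,\tfrac12;1;\alpha(q)\bigr)$: since $\vartheta_4^4=(1-\alpha)\vartheta_3^4$, Pfaff's transformation gives $\vartheta_4^2(q)={}_2F_1\bigl(\tfrac12,\tfrac12;1;-\tfrac{\alpha}{1-\alpha}\bigr)$; the quadratic transformation ${}_2F_1\bigl(\tfrac14,\tfrac14;1;4w(1-w)\bigr)={}_2F_1\bigl(\tfrac12,\tfrac12;1;w\bigr)$ with $w=-\tfrac{\alpha}{1-\alpha}$ turns this into ${}_2F_1\bigl(\tfrac14,\tfrac14;1;-\tfrac{4\alpha}{(1-\alpha)^2}\bigr)$; and Clausen's formula ${}_2F_1\bigl(\tfrac14,\tfrac14;1;x\bigr)^2={}_3F_2\bigl(\tfrac12,\tfrac12,\tfrac12;1,1;x\bigr)$ squares it. In particular $\vartheta_4^4$ is a weight-$2$ modular form whose associated modular function is $t:=-\tfrac{4\alpha(q)}{(1-\alpha(q))^2}$.

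Next I would apply the extension of \cite[Theorem~2.1]{Chan-Chan-Liu} stated in Section~2 to the pair $\bigl(\vartheta_4^4,t\bigr)$ at the CM point $\tau=i\sqrt N$, where $t$ equals $-\tfrac{4\alpha_N}{(1-\alpha_N)^2}$ and the resulting series converges (which is why $N>1$ is required, the series being only conditionally convergent when $N=2$). Logarithmic differentiation of the expansion above yields $\sum_k k\,\tfrac{(1/2)_k^3}{(k!)^3}t^k=\vartheta_4^4\,\bigl(q\tfrac{d}{dq}\log\vartheta_4^4\bigr)\big/\bigl(q\tfrac{d}{dq}\log t\bigr)$, so the theorem produces as the coefficient of $k$ the quantity $\bigl[\tfrac{1}{\vartheta_4^4}\,q\tfrac{d}{dq}\log t\bigr]$ evaluated at $q=e^{-\pi\sqrt N}$. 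Using the classical relation $q\tfrac{d\alpha}{dq}=\alpha(1-\alpha)\vartheta_3^4(q)$ together with $\vartheta_3^4=\vartheta_4^4/(1-\alpha)$, one finds $q\tfrac{d}{dq}\log t=\tfrac{1+\alpha}{1-\alpha}\vartheta_4^4(q)$, so this coefficient equals exactly $\tfrac{1+\alpha_N}{1-\alpha_N}$, as in \eqref{newBpi}.

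It then remains to match the constant term with $\widehat a_N$. After a routine rearrangement, the extension of \cite[Theorem~2.1]{Chan-Chan-Liu} reduces this to the identity
\begin{equation*}
\frac{\vartheta_4'}{\vartheta_4}\Bigl(\tfrac{i}{\sqrt N}\Bigr)
+N\,\frac{\vartheta_4'}{\vartheta_4}\bigl(i\sqrt N\bigr)
+\frac{\pi iN}{4}\,\vartheta_3^4\bigl(i\sqrt N\bigr)=\frac{i\sqrt N}{2},
\end{equation*}
where $\vartheta_j(\tau):=\vartheta_j(e^{\pi i\tau})$, a prime denotes differentiation in $\tau$, and the left-hand side collects $q\tfrac{d}{dq}\log\vartheta_4^4$ at the nomes $e^{-\pi/\sqrt N}$ and $e^{-\pi\sqrt N}$ while the right-hand side is the sole source of $1/\pi$. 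I would prove this identity by substituting the theta transformation $\vartheta_4(-1/\tau)=\sqrt{-i\tau}\,\vartheta_2(\tau)$, whose logarithmic derivative reads $\tfrac{\vartheta_4'}{\vartheta_4}(-1/\tau)=\tfrac{\tau}{2}+\tau^2\tfrac{\vartheta_2'}{\vartheta_2}(\tau)$; at $\tau=i\sqrt N$ this rewrites $\tfrac{\vartheta_4'}{\vartheta_4}(i/\sqrt N)$ as $\tfrac{i\sqrt N}{2}-N\tfrac{\vartheta_2'}{\vartheta_2}(i\sqrt N)$, and the displayed identity collapses to $\tfrac{\vartheta_4'}{\vartheta_4}-\tfrac{\vartheta_2'}{\vartheta_2}=-\tfrac{\pi i}{4}\vartheta_3^4$, i.e.\ $q\tfrac{d}{dq}\log(\vartheta_4/\vartheta_2)=-\tfrac14\vartheta_3^4$, which is again $q\tfrac{d\alpha}{dq}=\alpha(1-\alpha)\vartheta_3^4$ in disguise since $(\vartheta_4/\vartheta_2)^4=(1-\alpha)/\alpha$. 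To recognise the answer as $\widehat a_N$ I would finally use the Fricke-type evaluations $\vartheta_4^2(e^{-\pi/\sqrt N})=\sqrt N\,\vartheta_2^2(e^{-\pi\sqrt N})$ and $\vartheta_2^2(q)\vartheta_4^2(q)=\sqrt{\tfrac{\alpha(q)}{1-\alpha(q)}}\,\vartheta_4^4(q)$ at $q=e^{-\pi\sqrt N}$: together with the definition \eqref{Dhat} they turn $N\tfrac{\vartheta_4'}{\vartheta_4}(i\sqrt N)-\tfrac{\vartheta_4'}{\vartheta_4}(i/\sqrt N)$ into $\widehat D_N(e^{-\pi/\sqrt N})$ multiplied by $\pi i\sqrt N\sqrt{\alpha_N/(1-\alpha_N)}\,\vartheta_4^4(e^{-\pi\sqrt N})$, while the $\tfrac{\pi iN}{4}\vartheta_3^4(i\sqrt N)$ term contributes the summand $\tfrac1{2(1-\alpha_N)}$, producing precisely $\widehat a_N$.

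The step I expect to be the real work is this last one: verifying that the hypotheses of the extension of \cite[Theorem~2.1]{Chan-Chan-Liu} genuinely cover the $\vartheta_4$-base (in particular that the negative argument $t<0$, and the merely conditional convergence when $N=2$, cause no difficulty), and then carrying out the quasimodular bookkeeping so that the algebraic pieces assemble into $\widehat a_N$ with exactly the constants of \eqref{newBpi} rather than something merely proportional to them. The conceptual point throughout is the familiar one: the $1/\pi$ on the left of \eqref{newBpi} is precisely the $\tfrac{\tau}{2}$ anomaly in the theta transformation of $\vartheta_4$.
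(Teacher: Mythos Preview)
Your proof is correct and follows the same overall strategy as the paper---applying Theorem~\ref{new-General} (the extension of \cite[Theorem~2.1]{Chan-Chan-Liu}) with $\mathcal Z=\vartheta_4^4$---but with two tactical differences worth noting. First, the paper obtains the expansion $\vartheta_4^4(q)=\sum_k\frac{(1/2)_k^3}{(k!)^3}\,\mathcal X^k(q)$ in one stroke by observing $\vartheta_4^4(q)=\vartheta_3^4(-q)$ and invoking \eqref{theta3-X} together with $\alpha(-q)=-\alpha(q)/(1-\alpha(q))$; your route through Pfaff, the quadratic transformation, and Clausen is longer but equally valid. Second, for the constant term the paper simply identifies $C(q)=\alpha(q)/(1-\alpha(q))$ from the transformation \eqref{Z-t-1} and then reads $\widehat a_N$ off the two-term formula for $\mathbf a_N$ in Theorem~\ref{new-General}, the $C$-term supplying $\tfrac{1}{2(1-\alpha_N)}$ and the $\mathcal M_N$-term supplying the $\widehat D_N$ piece exactly as in the proof of Theorem~\ref{DLpi}; you instead unwind this into the explicit theta identity $\vartheta_4'/\vartheta_4-\vartheta_2'/\vartheta_2=-\tfrac{\pi i}{4}\vartheta_3^4$ and reassemble. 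The paper's approach is more economical and makes the parallel with Theorem~\ref{DLpi} cleaner; yours has the virtue of making the ``$\tfrac{\tau}{2}$ anomaly'' origin of the $1/\pi$ completely transparent.
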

Note that for odd prime $\ell$,
\begin{equation}\label{dlq}\widehat{D}_\ell(q) = D_\ell(-q).\end{equation}

Using \eqref{newBpi}, we derive some explicit examples, some of
which are due to the Borweins.
The series which we will prove with complete details is
\begin{equation}\label{Borweins-6}\frac{1}{\sqrt{6}\pi}
=\sum_{k=0}^\infty
\frac{\left(\frac{1}{2}\right)_k^3}{(k!)^3}
\left(\sqrt{3}\left(2-\sqrt{2}\right)k+
\frac{2}{3}\sqrt{3}-\frac{5}{12}\sqrt{6}\right)(-1)^k
\left(17-12\sqrt{2}\right)^k.\end{equation}
Series \eqref{Borweins-6} follows from the values
\begin{align*}
\alpha_6 &= 35+24\sqrt{2}-20\sqrt{3}-14\sqrt{6},\\
\intertext{and}
\widehat{D}_6\!\left(e^{-\pi/\sqrt{6}}\right)
&=\sqrt{\frac{111}{16}+5\sqrt{2}+\frac{33}{8}\sqrt{3}+\frac{45}{16}\sqrt{6}}.
\end{align*}
We observe that the terms in the sum on the right-hand side of
\eqref{Borweins-6} have alternating signs.
Although series with alternating signs in the ``quartic base''
are present in Ramanujan's work \cite[(35)--(38)]{Ram-mod},
no series with alternating signs in the ``classical base'' was recorded
by Ramanujan. It is likely that the study of series such as
\eqref{Borweins-6} began with the Borweins.




In Section 3, we study the function $D_\ell(-q^2)$ and express $D_\ell(-q^2)$
in terms of Hauptmoduls when $\ell=3,5,7,11$ and $23$.

In Section 4, we use the identities established in Section 3,
modular equations satisfied by $\alpha(q)$ and $\alpha(q^\ell)$,
Theorem \ref{DLpi} and Theorem \ref{new-Borweins-pi} to
derive several explicit series for $1/\pi$. We also
provide a table of identities associated with $D_\ell(q)$ that is an analogue
 of Ramanujan's table for $f(\ell)$. This table of formulas
allows us to derive series for $1/\pi$ associated with primes other
than $3,5,7,11$ and 23. In particular, we give an expression for
$D_{13}(q)$, for which its counterpart $f(13)$ is missing in Ramanujan's table.
The discovery of an expression for $D_{13}(q)$ in terms of $\alpha(q)$ and
$\alpha(q^{13})$ leads to a proof of \eqref{Classic-13}.

In Section 5, we turn our attention to the Borweins' cubic theta functions
(see \cite{Borweins-cubic}, \cite{Borweins-Garvan}) and define the following
cubic analogue of $D_\ell(q)$:
$$
C_\ell(q) = \frac{1}{a^2(q)a^2(q^\ell)}\,
\operatorname{det}\!\left(\begin{matrix} a(q) & a(q^\ell) \\
q\dfrac{da(q)}{dq} & q\dfrac{da(q^\ell)}{dq}\end{matrix}\right),
$$
where $$a(q) = \sum_{m,n=-\infty}^\infty q^{m^2+mn+n^2}.$$
Using $C_\ell(q)$, we present Theorem \ref{new-cubic-pi-expr} and Theorem \ref{CTL}, which are cubic analogues of Theorem~\ref{DLpi} and 
Theorem~\ref{new-Borweins-pi}, respectively.
Ramanujan did not offer any series for $1/\pi$ arising from the class of series given in
Theorem \ref{CTL}. The first few examples of such series are given by
H.H.~Chan, W.-C.~Liaw and
V.~Tan~\cite{Chan-Liaw-Tan-pi}.

We also derive representations of $C_\ell(q)$ in terms of Hauptmoduls
for $\ell=2,5$ and $11$ and provide a table of identities representing $C_\ell(q)$ in terms
of the cubic singular modulus. This table is an analogue of Ramanujan's table for $f(\ell)$. Using the representations
of $C_\ell(q)$ in terms of Hauptmoduls and cubic singular modulus, we derive
 several series for $1/\pi$ associated with the cubic base.

In Section 6, we state the following quartic analogue of Theorem \ref{DLpi}:
$$D^{\perp}_\ell(q) = \frac{1}{\sqrt{A^{3}(q)A^{3}(q^\ell)}}\,
\operatorname{det}\!\left(\begin{matrix} A(q) & A(q^\ell) \\
q\dfrac{dA(q)}{dq} & q\dfrac{dA(q^\ell)}{dq}\end{matrix}\right),$$
where
$$A(q^2) = \frac{\eta^8(\tau)+32\eta^8(4\tau)}{\eta^4(2\tau)}.$$
Instead of providing a table for $D^{\perp}_\ell(q)$ analogous to
Ramanujan's table for $f(\ell)$ for the purpose of deriving
Ramanujan's series for $1/\pi$ in the quartic base,
we establish a relation between $D^\perp_\ell(q)$ and $D_\ell(q)$
and show that Ramanujan's series for $1/\pi$ in the quartic base
can be derived from the table of identities for $D_\ell(q)$.
In particular, we provide a
proof of Ramanujan's series
$$
\frac{1}{\pi}=2\sqrt{2}\sum_{k=0}^\infty
\frac{\left(\frac{1}{2}\right)_k\left(\frac{1}{4}\right)_k
\left(\frac{3}{4}\right)_k}{(1)_k^3}
\left(1103+26390k\right)\left(\frac{1}{99^2}\right)^{2n+1}.
$$
This series is perhaps Ramanujan's most famous series for $1/\pi$ as
it was the series used by B.~Gosper in 1985 to compute $\pi$ to
17526200 digits (cf.\ \cite[p.~387 and p.~685]{PiSource}).

\section{New representations of the Ramanujan--Borweins
series for $1/\pi$ for the ``classical base''}

%

Let $\mathbf  Q$ denote the field of rational numbers.
We begin this section with a general series for $1/\pi$ given by
H.H.~Chan, S.H.~Chan and Z.-G.~Liu \cite[Theorem 2.1]{Chan-Chan-Liu}.

\begin{Theorem}\label{CCL-main}
Suppose $Z(q)$, $X(q)$ and $U(q)$ are functions satisfying
\begin{equation*}
rZ(e^{-2\pi\sqrt{r/s}})=Z(e^{-2\pi/\sqrt{rs}}),\end{equation*}
\begin{equation}\label{dX-UXZ} q\frac{dX(q)}{dq}=U(q)X(q)Z(q)\end{equation}
and
\begin{equation*}
Z(q)=\sum_{k=0}^\infty A_kX^k(q),
\quad A_k\in \mathbf  Q.\end{equation*}
Suppose
\begin{equation*}
M_N(q)=\frac{Z(q)}{Z(q^N)},\end{equation*}
for a positive integer $N>1$.
Let
$$a_N=\frac{U(q)X(q)}{2N}\frac{dM_N(q)}{dX(q)}\bigg|_{q=e^{-2\pi/\sqrt{Ns}}},$$
$$b_N=U(e^{-2\pi\sqrt{N/s}}),$$
and
$$X_N=X(e^{-2\pi\sqrt{N/s}}).$$
If the series $$\sum_{k=0}^\infty (b_Nk+a_N)A_kX_N^k$$
converges, then
$$\sqrt{\frac{s}{N}}\frac{1}{2\pi}=\sum_{k=0}^\infty (b_Nk+a_N)A_kX_N^k.$$

\end{Theorem}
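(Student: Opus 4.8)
\emph{Proof proposal.} The plan is to reduce the right-hand side of the claimed identity to a single logarithmic derivative of $Z$ at a CM point, and then to evaluate that derivative by differentiating the modular relation for $Z$. Put $q_1=e^{-2\pi/\sqrt{Ns}}$ and $q_N=e^{-2\pi\sqrt{N/s}}$, and note the elementary identity $q_1^{\,N}=q_N$. Differentiating $Z(q)=\sum_{k\ge0}A_kX^k(q)$ term by term and using \eqref{dX-UXZ} gives $q\,dZ/dq=U(q)Z(q)\sum_{k\ge0}kA_kX^k(q)$; evaluating at $q=q_N$ and recalling $b_N=U(q_N)$ and $X_N=X(q_N)$, one obtains
\[
\sum_{k=0}^\infty\bigl(b_Nk+a_N\bigr)A_kX_N^k
=\frac{1}{Z(q_N)}\left(q\frac{dZ}{dq}\right)\Big|_{q=q_N}+a_N\,Z(q_N).
\]
(Term-by-term differentiation is legitimate inside the radius of convergence, which is exactly the content of the standing convergence hypothesis; the rationality of the $A_k$ is not used in the derivation.)

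Next I would bring in the relation $rZ(e^{-2\pi\sqrt{r/s}})=Z(e^{-2\pi/\sqrt{rs}})$ in two ways. Read at $r=N$, it gives $Z(q_1)=N\,Z(q_N)$, equivalently $M_N(q_1)=N$. Treating it as an identity in the real parameter $r$ and differentiating in $r$ before setting $r=N$ — the chain rule converting $d/dr$ into the Euler operator $q\,d/dq$ through the factor $-\pi/\sqrt{rs}$ coming from $\tfrac{d}{dr}e^{-2\pi\sqrt{r/s}}$ — yields a linear relation of the shape
\[
\left(q\frac{dZ}{dq}\right)\Big|_{q=q_1}
=\frac{N^{3/2}\sqrt{s}}{\pi}\,Z(q_N)-N^2\left(q\frac{dZ}{dq}\right)\Big|_{q=q_N}.
\]

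Now compute the logarithmic derivative of $M_N(q)=Z(q)/Z(q^N)$ at $q=q_1$. Since $q\,\tfrac{d}{dq}Z(q^N)=N\,(q\,dZ/dq)\big|_{q^N}$ and $q_1^{\,N}=q_N$, the quotient rule together with $Z(q_1)=N\,Z(q_N)$ gives
\[
\left(q\frac{dM_N}{dq}\right)\Big|_{q=q_1}
=\frac{1}{Z(q_N)}\left[\left(q\frac{dZ}{dq}\right)\Big|_{q=q_1}-N^2\left(q\frac{dZ}{dq}\right)\Big|_{q=q_N}\right],
\]
and substituting the linear relation above collapses the bracket, leaving
\[
\frac{1}{Z(q_N)}\left(q\frac{dZ}{dq}\right)\Big|_{q=q_N}
=\sqrt{\frac{s}{N}}\,\frac{1}{2\pi}-\frac{1}{2N^2}\left(q\frac{dM_N}{dq}\right)\Big|_{q=q_1}.
\]
Finally, writing $\dfrac{dM_N}{dX}=\dfrac{q\,dM_N/dq}{q\,dX/dq}=\dfrac{q\,dM_N/dq}{U(q)X(q)Z(q)}$ and using $Z(q_1)=N\,Z(q_N)$, the defining formula for $a_N$ becomes precisely $a_N\,Z(q_N)=\tfrac{1}{2N^2}\bigl(q\,dM_N/dq\bigr)\big|_{q=q_1}$. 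Inserting the last two displays into the first, the two ``$a_N$-type'' terms cancel and what remains is $\sqrt{s/N}\,\tfrac{1}{2\pi}$, as claimed.

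I expect the only genuine difficulty to be bookkeeping: getting all signs and the powers of $N$ and $s$ correct when differentiating the modular relation in $r$ and when differentiating the composite $Z(q^N)$. Once those two computations are carried out carefully, everything else is the quotient rule and the definitions of $a_N$, $b_N$, $X_N$, and $M_N$; a minor point worth a sentence is the justification of term-by-term differentiation and of the identifications at the special values, which is subsumed under the convergence hypothesis.
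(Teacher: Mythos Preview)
Your argument is correct, and the bookkeeping checks out: the differentiation of the transformation law in $r$ gives exactly
\[
\left(q\frac{dZ}{dq}\right)\Big|_{q=q_1}=\frac{N^{3/2}\sqrt{s}}{\pi}\,Z(q_N)-N^2\left(q\frac{dZ}{dq}\right)\Big|_{q=q_N},
\]
and combined with the quotient-rule expression for $q\,dM_N/dq$ at $q_1$ and the identification $a_N Z(q_N)=\tfrac{1}{2N^2}(q\,dM_N/dq)|_{q_1}$, the cancellation goes through exactly as you describe.

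Note, however, that the paper does \emph{not} supply its own proof of this theorem: it is quoted verbatim from Chan--Chan--Liu \cite[Theorem~2.1]{Chan-Chan-Liu} and used as a black box to derive Theorem~\ref{DLpi}. (The paper later remarks that Theorem~\ref{new-General} ``can be proved in exactly the same way'', again without giving details.) Your approach---differentiating the transformation law $rZ(e^{-2\pi\sqrt{r/s}})=Z(e^{-2\pi/\sqrt{rs}})$ in the real parameter $r$ and combining with the term-by-term differentiation of $Z=\sum A_kX^k$---is the standard one for results of this type and is essentially the method going back to Ramanujan and the Borweins; it is almost certainly what the cited reference does as well.

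Two small remarks. First, your step ``treating it as an identity in the real parameter $r$'' deserves to be stated as a hypothesis rather than buried in the proof; as written, the theorem statement does not quantify over $r$, though in every application the transformation law comes from $\tau\mapsto-1/\tau$ and does hold for all $r>0$. Second, your observation that $A_k\in\mathbf Q$ plays no role is correct; that hypothesis is there for the arithmetic of the applications, not for the analytic identity.
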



We will now establish Theorem \ref{DLpi} using Theorem \ref{CCL-main}.

\begin{proof}[Proof of Theorem~\ref{DLpi}]

We begin by applying Theorem \ref{CCL-main} with
$Z(q)=\vartheta_3^4(q)$ and $$X(q)=4\alpha(q)\left(1-\alpha(q)\right).$$
This implies that $X_N=4\alpha_N(1-\alpha_N).$
It is known that
\cite[(3.5)]{Chan-Chan-Liu}
\begin{equation} \label{theta3-X}
Z(q)=\sum_{k=0}^\infty
\frac{\left(\frac{1}{2}\right)_k^3}{(k!)^3} X^k(q),\end{equation}
and this implies that
$$A_k=\frac{\left(\frac{1}{2}\right)_k^3}{(k!)^3}.$$
The function $\vartheta_3(q)$ satisfies the transformation formula
(see for example \cite[p.~43, Entry~27(ii)]{BerndtIII})
\begin{equation}\label{theta-transform}
\vartheta_3^2\left(e^{-\pi/\sqrt{N}}\right) =
N^{1/2} \vartheta_3^2\left(e^{-\pi\sqrt{N}}\right),\end{equation}
and this implies that
$$Z\left(e^{-\pi/\sqrt{N}}\right) =
N Z\left(e^{-\pi\sqrt{N}}\right).$$
In other words, the integer $s$ in Theorem \ref{CCL-main} is 4.

Next, from \cite[p.\ 120, Entry 9(i)]{BerndtIII}
\begin{equation}\label{dX}q\dfrac{dX(q)}{dq} =
(1-2\alpha(q))X(q)Z(q)\end{equation} we conclude that
$U(q)=1-2\alpha(q)$ and
that
$$b_N= 1-2\alpha_N.$$

In order to complete the proof of Theorem \ref{DLpi}, it remains to verify that
\begin{equation}\label{aN} a_N=
-\frac{2}{\sqrt{N}} {D_N}\!\left(e^{-\pi/\sqrt{N}}
\right).
\end{equation}
This follows by observing that%
\begin{equation}\label{dM} \frac{1}{M_N(q)}q\frac{dM_N(q)}{dq} =
-4\vartheta_3^2(q)\vartheta_3^2(q^N)D_N(q).\end{equation}
From \eqref{dM} and \eqref{dX}, we deduce that
$$q\frac{dM_N(q)}{dq}=\frac{dM_N(q)}{dX(q)}q\frac{dX(q)}{dq}
=\vartheta_3^4(q)U(q)X(q)\frac{dM_N(q)}{dX(q)}=
-4\frac{\vartheta_3^6(q)}{\vartheta_3^2(q^N)} D_N(q).$$
Hence,
\begin{equation*}U(q)X(q)\frac{dM_N(q)}{dX(q)}
\bigg|_{q=e^{-\pi/\sqrt{N}}}
 =
-\frac{4\vartheta_3^2(e^{-\pi/\sqrt{N}})}{\vartheta_3^2(e^{-\pi\sqrt{N}})
 }D_N(e^{-\pi/\sqrt{N}}),
\end{equation*}  and \eqref{aN}  follows from \eqref{theta-transform}.
\end{proof}

We now proceed to prove Theorem~\ref{new-Borweins-pi}.
We need the following generalization of \cite[Theorem~2.1]{Chan-Chan-Liu}.

\begin{Theorem}\label{new-General}
Suppose $\mathcal Z(q)$, $\mathcal X(q)$ and $\mathcal U(q)$
are functions satisfying
\begin{equation*}
\mathcal Z\!\left(e^{-2\pi/\sqrt{rs}}\right) =
r\mathcal Z\!\left(e^{-2\pi\sqrt{r/s}}\right) C\big(e^{-2\pi\sqrt{r/s}}\big),
\end{equation*} where $C(q)$
is a certain function in $q$,
\begin{equation*}
q\frac{d\mathcal X(q)}{dq}=
\mathcal U(q)\mathcal X(q)\mathcal Z(q)\end{equation*} and
\begin{equation*}
\mathcal Z(q)=
\sum_{k=0}^\infty \mathcal A_k\mathcal X^k(q),\quad
\mathcal A_k\in \mathbf  Q.\end{equation*}
Suppose \begin{equation*}
\mathcal M_N(q)=
\frac{\mathcal Z(q)}{\mathcal Z(q^N)},\end{equation*}
for a positive integer $N>1$.

Let
\begin{align*} {\mathbf a}_N &=\frac{\mathcal U\!\left(e^{-2\pi/\sqrt{Ns}}\right)
\mathcal X\!\left(e^{-2\pi/\sqrt{Ns}}\right)}{2N}
\frac{d \mathcal M_N(q)}{d \mathcal X(q)}\bigg|_{q=e^{-2\pi/\sqrt{Ns}}}
\\&\qquad +\frac{ \mathcal U\left(e^{-2\pi\sqrt{N/s}}\right)
\mathcal X\left(e^{-2\pi\sqrt{N/s}}\right)}
{2 C\!\left(e^{-2\pi\sqrt{N/s}}\right)}
\frac{d C(q)}{d\mathcal X(q)}\bigg|_{q=e^{-2\pi\sqrt{N/s}}},
\end{align*}
$${\mathbf b}_N=\mathcal U(e^{-2\pi\sqrt{N/s}}),$$
and
$${\mathbf X}_N=\mathcal X(e^{-2\pi\sqrt{N/s}}).$$
If the series $$\sum_{k=0}^\infty ({\mathbf b}_Nk+{\mathbf a}_N)
\mathcal A_k{\mathbf X}_N^k$$
converges, then
$$\sqrt{\frac{s}{N}}\frac{1}{2\pi}=\sum_{k=0}^\infty
({\mathbf b}_Nk+{\mathbf a}_N)\mathcal A_k{\mathbf X}_N^k.$$

\end{Theorem}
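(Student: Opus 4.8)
The plan is to follow the proof strategy of Theorem~\ref{CCL-main} (i.e.\ \cite[Theorem~2.1]{Chan-Chan-Liu}), carrying the new factor $C(q)$ through every step; the second summand in the definition of ${\mathbf a}_N$ is tailored precisely to absorb the contribution of $C$, and when $C\equiv 1$ the whole argument collapses to that of Theorem~\ref{CCL-main}. Throughout write $q_0=e^{-2\pi\sqrt{N/s}}$ and $q_1=e^{-2\pi/\sqrt{Ns}}$, so that $q_1^N=q_0$; note also that the transformation hypothesis at $r=N$ reads $\mathcal Z(q_1)=N\,\mathcal Z(q_0)\,C(q_0)$, whence $\mathcal M_N(q_1)=N\,C(q_0)$.

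First I would record the series side. Differentiating $\mathcal Z(q)=\sum_{k}\mathcal A_k\mathcal X^k(q)$ and using $q\,\frac{d\mathcal X(q)}{dq}=\mathcal U(q)\mathcal X(q)\mathcal Z(q)$ gives $q\frac{d}{dq}\log\mathcal Z(q)=\mathcal U(q)\sum_{k}k\,\mathcal A_k\mathcal X^k(q)$. Since ${\mathbf b}_N=\mathcal U(q_0)$ and ${\mathbf X}_N=\mathcal X(q_0)$, it follows that
\[
\sum_{k=0}^\infty({\mathbf b}_Nk+{\mathbf a}_N)\mathcal A_k{\mathbf X}_N^k
=\left[q\frac{d}{dq}\log\mathcal Z(q)\right]_{q=q_0}+{\mathbf a}_N\,\mathcal Z(q_0),
\]
so it suffices to evaluate the bracketed term and to identify ${\mathbf a}_N\,\mathcal Z(q_0)$ (the convergence hypothesis is used only to make sense of the left-hand side).

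Next I would produce two functional relations. Taking the logarithmic derivative of the transformation hypothesis with respect to the real parameter $r$ and setting $r=N$ yields
\[
\frac1N\left[q\frac{d}{dq}\log\mathcal Z\right]_{q_1}
=\frac1\pi\sqrt{\frac{s}{N}}-\left[q\frac{d}{dq}\log\mathcal Z\right]_{q_0}-\left[q\frac{d}{dq}\log C\right]_{q_0},
\]
while from $\mathcal M_N(q)=\mathcal Z(q)/\mathcal Z(q^N)$ and the chain rule (at $q=q_1$, where $q_1^N=q_0$),
\[
\left[q\frac{d}{dq}\log\mathcal M_N\right]_{q_1}=\left[q\frac{d}{dq}\log\mathcal Z\right]_{q_1}-N\left[q\frac{d}{dq}\log\mathcal Z\right]_{q_0}.
\]
Eliminating $\left[q\frac{d}{dq}\log\mathcal Z\right]_{q_1}$ between these two identities leaves
\[
\left[q\frac{d}{dq}\log\mathcal Z\right]_{q_0}=\frac1{2\pi}\sqrt{\frac{s}{N}}-\frac1{2N}\left[q\frac{d}{dq}\log\mathcal M_N\right]_{q_1}-\frac12\left[q\frac{d}{dq}\log C\right]_{q_0};
\]
the factor $\tfrac12$, which upgrades $\tfrac1\pi$ to $\tfrac1{2\pi}$, is produced precisely by this elimination, exactly as in the proof of Theorem~\ref{CCL-main}.

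Finally I would show that ${\mathbf a}_N\,\mathcal Z(q_0)=\frac1{2N}\left[q\frac{d}{dq}\log\mathcal M_N\right]_{q_1}+\frac12\left[q\frac{d}{dq}\log C\right]_{q_0}$, which together with the previous display and the series identity proves the theorem. For this one uses that, by the chain rule and $q\,d\mathcal X/dq=\mathcal U\mathcal X\mathcal Z$, one has $\mathcal U\mathcal X\,\frac{d\mathcal M_N}{d\mathcal X}=\frac{\mathcal M_N}{\mathcal Z}\,q\frac{d}{dq}\log\mathcal M_N$ and $\mathcal U\mathcal X\,\frac{dC}{d\mathcal X}=\frac{C}{\mathcal Z}\,q\frac{d}{dq}\log C$; evaluating the first at $q_1$ (with $\mathcal M_N(q_1)=NC(q_0)$ and $\mathcal Z(q_1)=N\mathcal Z(q_0)C(q_0)$, so that $\mathcal M_N(q_1)/\mathcal Z(q_1)=1/\mathcal Z(q_0)$) and the second at $q_0$, and substituting into the definition of ${\mathbf a}_N$, yields exactly the claimed expression. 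The argument is essentially mechanical; the only points of genuine content beyond Theorem~\ref{CCL-main} are verifying that the two-term formula defining ${\mathbf a}_N$ reproduces precisely $\frac1{2N}\left[q\frac{d}{dq}\log\mathcal M_N\right]_{q_1}+\frac12\left[q\frac{d}{dq}\log C\right]_{q_0}$, and keeping careful track of the factors $N$, $s$, $\pi$ attached to the two evaluation points $q_0$ and $q_1$; I do not expect a substantive obstacle beyond this, though one does tacitly use that the transformation hypothesis, being an identity in $r>0$, may be differentiated in $r$.
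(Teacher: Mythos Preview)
Your proposal is correct and follows exactly the route the paper indicates: the paper does not spell out a proof but simply remarks that Theorem~\ref{new-General} ``can be proved in exactly the same way as Theorem~\ref{CCL-main}'', and you have carried out precisely that program, threading the extra factor $C(q)$ through the logarithmic differentiation and showing that the two-term formula for ${\mathbf a}_N$ is designed to cancel the resulting $C$-contributions. Your bookkeeping of the factors $N$, $s$, $\pi$ at $q_0$ and $q_1$ checks out, and your caveat about differentiating the transformation hypothesis in $r$ is the same tacit assumption underlying \cite[Theorem~2.1]{Chan-Chan-Liu}.
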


The differences between Theorem \ref{new-General} and Theorem \ref{CCL-main}
are the transformation formulas for $\mathcal Z(q)$ and
$Z(q)$, which resulted in a difference between ${\mathbf a}_N$ and $a_N$.
Theorem \ref{new-General} can be proved in exactly the same way as
Theorem \ref{CCL-main}. Note that
Theorem~\ref{new-General} is a generalization of
\cite[Theorem~2.1]{Chan-Chan-Liu} since in the latter case,
the corresponding function $ C(q)$ is 1.

\begin{proof}[Proof of Theorem \ref{new-Borweins-pi}]
It is known from
Jacobi's triple product identity
\cite[p.~37, (22.4)]{BerndtIII} that
\begin{subequations}\label{T-prod-1}
\begin{equation}\label{T4-prod-1}\vartheta_4(q)=
\frac{\eta^2(\tau/2)}{\eta(\tau)},\end{equation}
and \cite[p.~36, Entry 22]{BerndtIII})
\begin{equation}\label{T2-prod-1}\vartheta_2(q) =
2\frac{\eta^2(2\tau)}{\eta(\tau)}.\end{equation}
\end{subequations}
Using \eqref{T-prod-1} and
\cite[p.~43, Entry~27(iii)]{BerndtIII}
\begin{equation}\label{eta-transform}\eta(-1/\tau)=
\sqrt{-i\tau}\eta(\tau),\end{equation}
we deduce that
\begin{equation}
\label{Z-t-1}  \vartheta_4^4\!\left(e^{-\pi/\sqrt{N}}\right)=
N\vartheta_2^4\!\left(e^{-\pi\sqrt{N}}\right)
=N\vartheta_4^4\!\left(e^{-\pi\sqrt{N}}\right)
\frac{\vartheta_2^4\!\left(e^{-\pi\sqrt{N}}\right)}
{\vartheta_4^4\!\left(e^{-\pi\sqrt{N}}\right)}.
\end{equation}
Note that if we let $\mathcal Z(q)=\vartheta_4^4(q)$
in Theorem \ref{new-General}, then
$s=4$ and
$$C(q) = \frac{\vartheta_2^4(q)}{\vartheta_4^4(q)}.$$
Using  Jacobi's identity (see
\cite[p.~40, Entry~25(vii)]{BerndtIII} or \cite[(2.1.10)]{PiandAGM})
\begin{equation}\label{Jacobi2}\vartheta_3^4(q) =
\vartheta_2^4(q)+\vartheta_4^4(q),\end{equation}
we find that
\begin{equation}\label{C-alpha} C(q) =
\frac{\vartheta_2^4(q)}{\vartheta_3^4(q)}
\frac{\vartheta_3^4(q)}{\vartheta_4^4(q)}
=\frac{\alpha(q)}{1-\alpha(q)},\end{equation} where
$\alpha(q)$ is given by \eqref{alphac}.

Next, observe that $$\mathcal Z(q)=\vartheta_4^4(q)=\vartheta_3^4(-q).$$
Therefore, by
\eqref{theta3-X},  we deduce that
\begin{equation*}
\mathcal {Z}(q)=\sum_{k=0}^\infty\mathcal A_k
 \mathcal {X}^k(q),\end{equation*}
where
$$\mathcal A_k=\frac{\left(\frac{1}{2}\right)_k^3}{(k!)^3}$$
and
$$\mathcal {X}(q) = 4\alpha(-q)(1-\alpha(-q)).$$
Using \eqref{Jacobi2}, we observe that
\begin{equation}\label{alpha-minus}
\alpha(-q)=-\frac{\alpha(q)}{1-\alpha(q)},
\end{equation}
and hence
\begin{equation}\label{mathcalX-alpha}
\mathcal X(q)= -4\frac{\alpha(q)}{1-\alpha(q)}.\end{equation}
Next, \eqref{dX-UXZ} holds with $q$ replaced by $-q$ and therefore,
\begin{equation}\label{mathcalU-alpha}\mathcal U(q) = 1-2\alpha(-q)
=\frac{1+\alpha(q)}{1-\alpha(q)},\end{equation}
where the last equality follows from \eqref{alpha-minus}.
Letting $q=e^{-\pi/\sqrt{N}}$, we deduce from \eqref{mathcalX-alpha} and
\eqref{mathcalU-alpha}  that
$${\mathbf X}_N = -4\frac{\alpha_N}{1-\alpha_N}$$
and
$${\mathbf b}_N=\frac{1+\alpha_N}{1-\alpha_N}.$$
Using the argument as in the proof of Theorem \ref{DLpi}, we may write the
first term of ${\mathbf a}_N$ involving $\mathcal M_N$ in terms of
$\widehat{D}_\ell(q).$
The second term of ${\mathbf a}_N$ follows from \eqref{C-alpha},
\eqref{mathcalX-alpha} and \eqref{mathcalU-alpha}.
Substituting the expressions of ${\mathbf a}_N, {\mathbf b}_N,$ and
${\mathbf X}_N$ in Theorem \ref{new-General}, we
complete the proof of Theorem \ref{new-Borweins-pi}.
\end{proof}

The series \eqref{newBpi}, in a slightly different form,
was discovered by the Borweins
\cite[p.~182, (5.5.14)]{PiandAGM}.



%



%

\section{The functions $D_\ell(q)$ and $\widehat{D}_\ell(q)$}

In this section, instead of working with $D_\ell(q)$, we derive identities for
$\widehat{D}_\ell(q)$ given by \eqref{Dhat}.

We first establish the following fact:
\begin{Theorem}\label{mainthm} Let $\ell$ be an odd prime and let
$$\omega_\ell = \begin{cases} 2 \quad\text{if $\ell\equiv 1 \pmod{4}$,}\\
1 \quad\text{if $\ell \equiv 3\pmod{4}$.}\end{cases}$$
Then $\widehat{D}^{\omega_\ell}_\ell(q^2)$
is a modular function on $\Gamma_0(2\ell)+W_\ell$, where
$\Gamma_0(N)+W_e$ denotes the group generated by $\Gamma_0(N)$ and
 \begin{equation*} W_e=\begin{pmatrix} a\sqrt{e} & b/\sqrt{e} \\
cN/\sqrt{e} & d\sqrt{e}\end{pmatrix},\end{equation*} with
 $e|N$, $\gcd(N/e,e)=1$ and $\operatorname{det}(W_e) = 1.$
\end{Theorem}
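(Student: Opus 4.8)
The plan is to verify directly that $\widehat D_\ell^{\omega_\ell}(q^2)$ is a meromorphic function on the upper half plane that is invariant under the group $\Gamma_0(2\ell)+W_\ell$ and meromorphic at the cusps; this is what it means to be a modular function on that group. First I would express $\widehat D_\ell(q)$ entirely in terms of the Dedekind $\eta$-function. Using \eqref{T4-prod-1}, namely $\vartheta_4(q)=\eta^2(\tau/2)/\eta(\tau)$, one gets a clean formula for $\vartheta_4(q^2)=\eta^2(\tau)/\eta(2\tau)$, and likewise for $\vartheta_4(q^{2\ell})=\eta^2(\ell\tau)/\eta(2\ell\tau)$. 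Writing the Wronskian in \eqref{Dhat} in logarithmic form,
\begin{equation*}
\widehat D_\ell(q^2)=\frac{1}{\vartheta_4^4(q^2)\vartheta_4^4(q^{2\ell})}\,\vartheta_4^2(q^2)\vartheta_4^2(q^{2\ell})\left(q\frac{d}{dq}\log\vartheta_4(q^2)-q\frac{d}{dq}\log\vartheta_4(q^{2\ell})\right),
\end{equation*}
and substituting the $\eta$-quotients, the logarithmic-derivative term becomes a $\mathbf Z$-linear combination of $q\frac{d}{dq}\log\eta(m\tau)$ for $m\in\{1,2,\ell,2\ell\}$; since $q\frac{d}{dq}\log\eta(m\tau)=\frac m{12}P(q^{2m})$ up to the usual constant, this reduces $\widehat D_\ell(q^2)$ to an explicit expression in $\eta$-quotients and weight-$2$ Eisenstein-type pieces $P(q^{2m})-mP(q^{2m})$ differences, which are holomorphic quasimodular forms of weight $2$ on $\Gamma_0(2\ell)$. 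Combined with the prefactor $\vartheta_4^{-2}(q^2)\vartheta_4^{-2}(q^{2\ell})$, one checks that $\widehat D_\ell(q^2)$ itself is a meromorphic modular form of weight $0$ — i.e.\ a modular function — on $\Gamma_0(2\ell)$: the $P$-quasimodular anomalies of the two terms in the Wronskian cancel (this is the standard fact that a Wronskian of two modular forms of the same weight $k$ has weight $2k+2$ and is genuinely modular, no quasimodular correction), and here $k=\tfrac12$ so the Wronskian has weight $3$, dividing by $\vartheta_4^3(q^2)\vartheta_4^3(q^{2\ell})$ of weight $3$ gives weight $0$.

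The real content is the invariance under the Atkin–Lehner (Fricke-type) involution $W_\ell$ on $\Gamma_0(2\ell)$. Here the power $\omega_\ell$ enters. Under $W_\ell$, which interchanges $\tau\leftrightarrow$ (a Möbius image scaling $\tau\to\ell\tau$-type behavior with appropriate level-$2$ factor), the pair $(\vartheta_4(q^2),\vartheta_4(q^{2\ell}))$ gets swapped up to an automorphy factor, so the Wronskian in \eqref{Dhat} picks up a sign and a factor coming from the non-trivial multiplier system of $\vartheta_4$ (equivalently, of $\eta$) under $W_\ell$. Because $\vartheta_4$ has a multiplier of order dividing $4$ (the $\eta$-multiplier contributes roots of unity), a single copy of the Wronskian quotient need not be $W_\ell$-invariant, but an $\omega_\ell$-th power will be, and the case distinction $\omega_\ell=2$ for $\ell\equiv1\pmod4$ versus $\omega_\ell=1$ for $\ell\equiv3\pmod4$ is exactly the arithmetic of when the $\ell$-th Atkin–Lehner action already fixes the quotient versus when one must square to kill a sign. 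I would make this precise by tracking the transformation of $\eta(\tau),\eta(2\tau),\eta(\ell\tau),\eta(2\ell\tau)$ under an explicit matrix representative of $W_\ell$ (using \eqref{eta-transform} and the $\eta$-transformation under $\tau\mapsto\tau+1$), computing the resulting scalar multiplier on $\widehat D_\ell(q^2)$, and checking it is a $(2/\omega_\ell)$-th root of unity of the predicted order depending on $\ell\bmod 4$.

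The last step is to check meromorphy at the cusps of $\Gamma_0(2\ell)+W_\ell$: since $\vartheta_4(q^2)$ and $\vartheta_4(q^{2\ell})$ are non-vanishing holomorphic on the upper half plane with known behavior (a zero or pole of finite order) at each cusp $0,1/2,1/\ell,\infty$ of $\Gamma_0(2\ell)$, and the logarithmic-derivative factor has at worst simple poles there coming from $P$, the $q$-expansion of $\widehat D_\ell(q^2)$ at each cusp has only finitely many negative terms; this is routine once the $\eta$-quotient form is in hand, and $W_\ell$-invariance reduces the number of cusps to check. I expect the main obstacle to be the bookkeeping in the $W_\ell$-transformation: getting the $\eta$-multiplier factors and the Dedekind-sum phases to combine into exactly the root of unity that the parity of $\ell$ predicts, and confirming that $\omega_\ell$ (not some larger exponent) suffices. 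Everything else — the $\eta$-quotient rewriting, the weight count, the cusp analysis — is standard modular-forms manipulation.
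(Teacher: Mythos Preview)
Your overall strategy is the same as the paper's: rewrite $\vartheta_4(q^2)=\eta^2(\tau)/\eta(2\tau)$, pass to the logarithmic derivative so that $\widehat D_\ell(q^2)=S_\ell(\tau)/(T^2(\tau)T^2(\ell\tau))$ with $T=\vartheta_4(q^2)$ and $S_\ell=\ell\Psi(\ell\tau)-\Psi(\tau)$ a genuine weight-$2$ form on $\Gamma_0(2\ell)$ (the quasimodular anomalies cancel), and then check the behaviour under $W_\ell$. Where your write-up goes wrong is the sentence asserting that $\widehat D_\ell(q^2)$ \emph{itself} is a modular function on $\Gamma_0(2\ell)$, with the exponent $\omega_\ell$ only entering at the $W_\ell$ step.

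That is not correct. The ``standard fact'' about Wronskians that you invoke gives weight $2k+2$ with multiplier $\chi_1\chi_2$ when the two inputs carry multipliers $\chi_1,\chi_2$; dividing by $T^3T_\ell^3$ leaves you with multiplier $(\chi_1\chi_2)^{-2}$, i.e.\ exactly the multiplier of $T^2(\tau)T^2(\ell\tau)$. If you compute this from the explicit $\eta$-multiplier $\xi(a,b,c,d)=\bigl(\tfrac{c}{d}\bigr)e^{\pi i(d-1-cd/2)/4}$ in the paper's \eqref{T-t-1}, writing $\gamma=2\ell m$ for $V=\begin{pmatrix}\alpha&\beta\\\gamma&\delta\end{pmatrix}\in\Gamma_0(2\ell)$, you find that the combined multiplier of $T^2(\tau)T^2(\ell\tau)$ under $V$ is $e^{-\pi i\, m\delta(\ell+1)/2}$. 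For $\ell\equiv 3\pmod 4$ one has $4\mid(\ell+1)$ and this is always $1$, but for $\ell\equiv 1\pmod 4$ one has $(\ell+1)/2$ odd and the multiplier equals $(-1)^m$, which is genuinely non-trivial on $\Gamma_0(2\ell)$. So already for $\Gamma_0(2\ell)$-invariance one must take the $\omega_\ell$-th power; this is precisely why the paper states that $(T^2(\ell\tau)T^2(\tau))^{\omega_\ell}$, not $T^2(\ell\tau)T^2(\tau)$, is the weight-$2\omega_\ell$ form on $\Gamma_0(2\ell)$. Once you correct this, the $W_\ell$-check (swap $T\leftrightarrow T_\ell$ and track the $\eta$-phases) and your cusp discussion go through.
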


\begin{proof}
Let
\begin{equation}\label{T4-prod} T(\tau):=\vartheta_4(q^2)=
\frac{\eta^2(\tau)}{\eta(2\tau)},\end{equation}
where the product representation of $\vartheta_4(q^2)$ follows from
\eqref{T4-prod-1}.

Let $$\Gamma_0(N)=\left\{\begin{pmatrix} a & b \\ c & d \end{pmatrix}
\bigg|\;a,b,c,d\in \mathbf Z, ad-bc =1, c\equiv 0\pmod{N}\right\}.$$
For $$U=\begin{pmatrix} a & b\\ c  & d \end{pmatrix}\in \Gamma_0(2),$$ let
$$U\circ \tau := \frac{a\tau+b}{c\tau+d}.$$
It is known, using the transformation formula of the $\eta$-function
(see for example \cite[p.\ 163]{Rademacher} or
\cite[Theorem~1.2]{Chan-TeohGuan}) and \eqref{T4-prod}, that
\begin{equation}\label{T-t-1} T\left(U\circ\tau\right) =
\xi(a,b,c,d) (c\tau+d)^{1/2} T(\tau)\end{equation}
where
$$\xi(a,b,c,d)=\left(\frac{c}{d}\right)e^{\pi i(d-1-cd/2)/4}.$$
Identity \eqref{T-t-1} implies that if
$$\Psi(\tau)=\frac{1}{T(\tau)}\frac{dT}{d\tau}(\tau),$$ then
\begin{equation}\label{TransF}\Psi\left(U\circ \tau\right)
=\left(\frac{c}{2(c\tau+d)}+\Psi\left(\tau\right)
\right)(c\tau+d)^2.\end{equation}

Next, let $\ell$ be an odd prime and  observe that for
$$V=\begin{pmatrix} \alpha & \beta \\ \gamma  & \delta \end{pmatrix}
\in \Gamma_0(2\ell),$$
\begin{equation*}
\ell\Psi\left(\ell\left(V\circ \tau\right)\right) =
\left(\frac{\gamma}{2(\gamma\tau+\delta)}+\ell\Psi\left(\ell\tau\right)
\right)(\gamma\tau+\delta)^2.\end{equation*}
Note that since
$$V\in \Gamma_0(2\ell)\subset \Gamma_0(2),$$
\eqref{TransF} also holds for the matrix $V$, and we find that
$$S_\ell(\tau)=\ell\Psi(\ell\tau) - \Psi(\tau)$$
is a modular form of weight 2 on $\Gamma_0(2\ell)$.
By \eqref{T-t-1}, we find that
$\left(T^2(\ell\tau)T^2(\tau)\right)^{\omega_\ell}$ is a modular
form of weight $2\omega_\ell$ on $\Gamma_0(2\ell)$. Therefore,
$$
\left(\frac{S_\ell(\tau)}{T^2(\ell\tau)T^2(\tau)}\right)^{\omega_\ell}$$
is a modular function on $\Gamma_0(2\ell)$.

Next, by using \eqref{T-t-1}, we conclude that
$$\left(\frac{S_\ell(W_\ell\circ\tau)}{T^2(\ell(W_\ell\circ\tau))
T^2(W_\ell\circ\tau)}\right)^{\omega_\ell}
=\left(\frac{S_\ell(\tau)}{T^2(\ell\tau)T^2(\tau)}\right)^{\omega_\ell}.$$
Observe that by \eqref{Dhat}, we find that
$$\widehat{D}_\ell^{\omega_\ell}(q^2)=\left(\frac{S_\ell(\tau)}{T^2(\ell\tau)
T^2(\tau)}\right)^{\omega_\ell}.$$
This implies, from the transformation properties of
$\displaystyle\left(\frac{S_\ell(\tau)}
{T^2(\ell\tau)T^2(\tau)}\right)^{\omega_\ell}$,
that $\displaystyle\widehat{D}_\ell^{\omega_\ell}(q^2)$
is a modular function on $\Gamma_0(2\ell)+W_\ell.$
%
\end{proof}

We now use Theorem~\ref{mainthm} to derive identities for
$\widehat{D}_\ell(q)$.
We first determine prime numbers $\ell$ for which
all modular functions associated with $\Gamma_0(2\ell)+W_\ell$
are rational functions of a single function, which we shall call a
Hauptmodul. From the table in \cite[p.~14]{Lang-Chan}, we find that
this occurs when  $\ell =3,5,7,11$ and $23$.
For such a prime $\ell$, we construct a Hauptmodul $H_l$ (which a priori
is not unique) for the corresponding field of functions for
$\Gamma_0(2\ell)+W_\ell$ and obtain the following identities:

\begin{Theorem}\label{Haupt}
\noindent Let
$$
H_{\ell}=H_{\ell}(\tau)=\left(\frac{\eta(2\tau)\eta(2\ell\tau)}
{\eta(\tau)\eta(\ell\tau)}\right)^{\frac{24}{\ell+1}}.
$$
Then
\begin{subequations}
\begin{align}
\label{H3}\widehat{D}_{3}(q^2) &= 2H_3,\\
\notag
\widehat{D}_{5}^2(q^2)&= 4H_5^2(1+4H_5),\\
\notag
\widehat{D}_{7}(q^2)&=
2H_7\left(1+3H_7\right),\\
\label{H11}\widehat{D}_{11}(q^2)&=
2H_{11}\left(1+4H_{11}+5H_{11}^2\right),\\
\label{H23} \widehat{D}_{23}(q^2) &= 2H_{23}
\left(1+5H_{23}+13H_{23}^2+20H_{23}^3+
20H_{23}^4+11H_{23}^5\right).\end{align}
\end{subequations}
%
\end{Theorem}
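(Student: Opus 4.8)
The plan is to use Theorem~\ref{mainthm}: for each $\ell \in \{3,5,7,11,23\}$ the function $\widehat{D}_\ell^{\omega_\ell}(q^2)$ is a modular function on $\Gamma_0(2\ell)+W_\ell$, and for exactly these primes the function field of $\Gamma_0(2\ell)+W_\ell$ has genus zero, so every modular function on this group is a rational function of the Hauptmodul $H_\ell$. Thus each asserted identity is an identity between rational functions of $H_\ell$, and to prove it one only needs to (i) verify that the proposed right-hand side is indeed a polynomial in $H_\ell$ lying in the function field, and (ii) check that both sides have the same $q$-expansion up to a number of terms exceeding the total order of poles, which is the Sturm-type bound guaranteeing equality of two modular functions on a genus-zero group.

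First I would record the $q$-expansion of $T(\tau)=\vartheta_4(q^2)=\eta^2(\tau)/\eta(2\tau)$ from \eqref{T4-prod} and hence of $\Psi(\tau)=T'(\tau)/T(\tau)$, and therefore of $S_\ell(\tau)=\ell\Psi(\ell\tau)-\Psi(\tau)$ and of $\widehat{D}_\ell^{\omega_\ell}(q^2)=\bigl(S_\ell(\tau)/(T^2(\ell\tau)T^2(\tau))\bigr)^{\omega_\ell}$. Next I would compute the $q$-expansion of $H_\ell=\bigl(\eta(2\tau)\eta(2\ell\tau)/(\eta(\tau)\eta(\ell\tau))\bigr)^{24/(\ell+1)}$; note that $H_\ell = q^{(\ell-1)/(\ell+1)\cdot\text{(something)}}(1+\cdots)$ — more precisely, using $\eta(\tau)=q^{1/12}\prod(1-q^{2k})$ with $q=e^{\pi i\tau}$, the leading power of $q$ in $\eta(2\tau)\eta(2\ell\tau)/(\eta(\tau)\eta(\ell\tau))$ is $q^{(2+2\ell-1-\ell)/12}=q^{(\ell+1)/12}$, so $H_\ell = q^{2}(1+\cdots)$ after the $24/(\ell+1)$ power. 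I would then verify, using the known orders of vanishing of $\eta$-quotients at the cusps of $\Gamma_0(2\ell)$ (via the Ligozat/Newman criteria) together with the action of the Fricke–Atkin–Lehner involution $W_\ell$, that $H_\ell$ has a single simple pole on the compactified modular curve $X_0(2\ell)/\langle W_\ell\rangle$ and hence is a genuine Hauptmodul. With the $q$-expansions in hand, for each $\ell$ the claimed polynomial $P_\ell(H_\ell)$ on the right is determined by matching finitely many coefficients, and equality of the two sides then follows once agreement is checked to order exceeding the pole order of $\widehat{D}_\ell^{\omega_\ell}(q^2)-P_\ell(H_\ell)$ as a modular function on the genus-zero group.

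The step I expect to be the main obstacle is controlling the pole orders precisely enough to justify the finite-verification shortcut — i.e.\ proving that $\widehat{D}_\ell^{\omega_\ell}(q^2)$, a priori only known to be a modular function on $\Gamma_0(2\ell)+W_\ell$, has poles supported only at the cusps and bounding its total pole order, so that the comparison of $q$-expansions is conclusive rather than merely suggestive. This requires a careful cusp-by-cusp analysis: the behaviour of $S_\ell(\tau)=\ell\Psi(\ell\tau)-\Psi(\tau)$ (a weight-$2$ form, being a difference of logarithmic derivatives of $T$) at each cusp of $\Gamma_0(2\ell)$, divided by the weight-$2\omega_\ell$ form $(T^2(\ell\tau)T^2(\tau))^{\omega_\ell}$, must be examined, and since $T=\eta^2(\tau)/\eta(2\tau)$ is an eta-quotient its cusp orders are explicitly computable; one then checks the holomorphy of $S_\ell$ at the cusps where $T^2(\ell\tau)T^2(\tau)$ vanishes. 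Once the divisor of $\widehat{D}_\ell^{\omega_\ell}(q^2)$ is pinned down (in particular, that it is a polynomial and not merely a rational function in $H_\ell$, which follows if its only pole is at the cusp $i\infty$, forced to match the pole of the stated $P_\ell(H_\ell)$), the remaining coefficient-matching is a routine finite computation that I would not carry out here.
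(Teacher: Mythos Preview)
Your proposal is correct and follows essentially the same approach as the paper: the paper does not give a detailed proof of Theorem~\ref{Haupt} but, immediately before stating it, explains that by Theorem~\ref{mainthm} the function $\widehat{D}_\ell^{\omega_\ell}(q^2)$ is modular on $\Gamma_0(2\ell)+W_\ell$, that this group has genus zero precisely for $\ell=3,5,7,11,23$ (citing \cite[p.~14]{Lang-Chan}), and that one then constructs the Hauptmodul $H_\ell$ and reads off the identities. Your write-up is in fact more explicit than the paper about the Sturm-type verification step and the cusp analysis needed to bound pole orders; the paper relegates that kind of justification to a later remark (after Table~1) about comparing $q$-series expansions.
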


\smallskip

\begin{Remark}\label{q-q}{\rm
We note that since $q=e^{\pi i \tau}$, the identities given in
Theorem~\ref{Haupt} can all be expressed in terms of $q^2$. Replacing
$q^2$ by $q$, we obtain identities for $\widehat{D}_\ell(q)$ in terms of
$$H_\ell(\tau/2),$$ for $\ell=3,5,7,11$ and $23$,
and these functions are in terms of infinite products with variable $q$.
Replacing $q$ by $-q$ and using
\begin{equation}\label{qtominusq} \prod_{k=1}^\infty (1-(-q)^k) =
\prod_{k=1}^\infty \frac{(1-q^{2k})^3}{(1-q^{k})(1-q^{4k})},\end{equation}
we obtain identities from 
Theorem~\ref{Haupt}  expressing
$D_\ell(q)$ in terms of Dedekind $\eta$-functions
$\eta(\tau/2)$, $\eta(\tau)$, $\eta(\ell\tau/2)$ and $\eta(\ell\tau)$.


}
\end{Remark}

\section{Explicit examples of Theorems \ref{DLpi} and
\ref{new-Borweins-pi}}


In this section, we first derive explicit series for $1/\pi$ from
Theorem~\ref{Haupt}
for $N=3,5,7,11$ and 23. We give complete details only for the case $N=3$.
We then derive explicit series from Theorem~\ref{new-Borweins-pi} for
$N=6,10,14,22$ and 46. We need to work harder deriving these series as
our identities in Theorem~\ref{Haupt} are only for $\ell=3,5,7,11$ and
$23$ instead of $6,10,14,22$ and 46.
Again we  give complete details only for the case $N=6.$

\subsection{Case $N=3$}
\quad\medskip

Following Remark~\ref{q-q}, we deduce from \eqref{H3} that
%
\begin{equation}\label{H3*}D_3(q)=-2\frac{\eta^3(\tau)\eta^3(3\tau)}
{\vartheta_3^3(q)\vartheta_3^3(q^3)},\end{equation}
where we have used \eqref{qtominusq}
and the product representation of $\vartheta_3(q)$
\cite[p.~36, Entry~22]{BerndtIII}):
\begin{equation}\label{T3-prod}\vartheta_3(q) =
\frac{\eta^5(\tau)}{\eta^2(2\tau)\eta^2(\tau/2)}.\end{equation}

Let $\tau=i/\sqrt{3}$ in \eqref{H3*}. Observe that
\begin{equation}\label{3-theta3-explicit}
D_3(e^{-\pi/\sqrt{3}})
=-2\frac{\eta^6(i\sqrt{3})}{\vartheta_3^6(e^{-\pi\sqrt{3}})},
\end{equation}
where we have used \eqref{theta-transform} and \eqref{eta-transform}.


Next, using \eqref{T4-prod-1}, \eqref{T3-prod} and \eqref{T2-prod-1},
we immediately deduce Jacobi's identity
\cite[pp.~515--517]{Jacobi}
\begin{equation}\label{Jacobi1}\eta^{24}(\tau)=\frac{1}{2^8}\vartheta_3^{24}(q)
\frac{\vartheta_2^8(q)}{\vartheta_3^8(q)}
\frac{\vartheta_4^8(q)}{\vartheta_3^8(q)}.\end{equation}
Letting $q=e^{-\pi\sqrt{n}}$ in \eqref{Jacobi1}, we deduce that
\begin{equation}\label{EvalDn}\frac{\eta^6(i\sqrt{N})}
{\vartheta_3^6(e^{-\pi\sqrt{N}})} =
\frac{1}{2^2}\sqrt{\alpha_N(1-\alpha_N)},\end{equation}
where we have used \eqref{Jacobi2}.
It remains to compute $\alpha_3$.
It is known that  \cite[p.~230, Entry~5(i)]{BerndtIII}
\begin{equation}\label{Mod-3}\left(\left(1-\alpha(q)\right)
\left(1-\alpha(q^3)\right)\right)^{1/4}+
\left(\alpha(q)\alpha(q^3)\right)^{1/4}=1.\end{equation}
When $q=e^{-\pi/\sqrt{3}}$,
\begin{align}\label{beta-eval}\alpha(e^{-3\pi/\sqrt{3}})& =
\alpha(e^{-\pi\sqrt{3}}),\\
\intertext{and} \label{alpha-transform}
\alpha(e^{-\pi/\sqrt{r}})&=1-\alpha(e^{-\pi\sqrt{r}}),\end{align}
with $r=3$.
Identity \eqref{alpha-transform} is a consequence of \eqref{Jacobi2}
and \eqref{eta-transform}.
Substituting \eqref{beta-eval} and \eqref{alpha-transform} into
\eqref{Mod-3}, we conclude that
$$\alpha_3(1-\alpha_3) =\frac{1}{2^4},$$ which implies that
\begin{equation}\label{a3} \alpha_3 = \frac{1}{2}-
\frac{\sqrt{3}}{4}.\end{equation}
Let $N=3$ in \eqref{EvalDn}. Substituting \eqref{a3} in the resulting
equation, we deduce using \eqref{3-theta3-explicit} that
$$D_3(e^{-\pi/\sqrt{3}})=-\frac{1}{8}.$$ From \eqref{new-pi-expr},
we deduce the following Ramanujan series for $1/\pi$:

$$\frac{1}{\pi}=\sum_{k=0}^\infty \frac{\left(\frac{1}{2}\right)_k^3}
{(k!)^3}\left(\frac{3}{2}k+\frac{1}{4}\right)\frac{1}{4^k}.$$

We have learnt from our derivation of the series corresponding to $N=3$
that in order to derive a series for $1/\pi$
corresponding to $N=3,5,7,11,23$ from Theorem~\ref{DLpi} and
Theorem~\ref{Haupt}, we only need the value of $\alpha_N$.
As such, for the following derivations of the series for $1/\pi$
corresponding to $N=5,7,11,23$ we
will only discuss the evaluation of $\alpha_N$.

\subsection{Case $N=5$}\quad

\medskip

The value of $\alpha_5$ can be determined from the following
modular equation of degree 5 \cite[p.~280, Entry~13(i)]{BerndtIII}:
\begin{align*}\left(\alpha(q)\alpha(q^5)\right)^{1/2}&+
\left(\left(1-\alpha(q)\right)\left(1-\alpha(q^5)\right)\right)^{1/2}\\
&+2\left(16\alpha(q)\alpha(q^5)\left(1-\alpha(q)\right)
\left(1-\alpha(q^5)\right)\right)^{1/6} = 1.
\end{align*}
This modular equation allows us to conclude that
$$\alpha_5 = \frac{1}{2}-\sqrt{-2+\sqrt{5}}.$$
%
%
%
%
Therefore, the series we obtain from \eqref{new-pi-expr} and
Theorem~\ref{Haupt} is
$$\frac{1}{\pi} = \sum_{k=0}^\infty \frac{\left(\frac{1}{2}\right)_k^3}
{(k!)^3}\left(\left(2\sqrt{-10+5\sqrt{5}}\right)k+
\frac{\sqrt{-22+10\sqrt{5}}}{2}\right)\left(9-4\sqrt{5}\right)^k.
$$

\subsection{Case $N=7$}\quad
\medskip

The value $\alpha_7$ can be derived from the following modular equation
of degree 7 \cite[p.~314, Entry~19(i)]{BerndtIII}:
\begin{equation}\label{mod-7}\left(\alpha(q)\alpha(q^7)\right)^{1/8}+
\left(\left(1-\alpha(q)\right)
\left(1-\alpha(q^7)\right)\right)^{1/8} = 1.\end{equation}
This implies that $$4\alpha_7(1-\alpha_7)=\frac{1}{64},$$
$$1-2\alpha_7 = \frac{3}{8}\sqrt{7},$$ and
$$\alpha_7=\frac{1}{2}-\frac{3}{16}\sqrt{7}.$$
The series we obtain from \eqref{new-pi-expr} and Theorem~\ref{Haupt} is
$$\frac{1}{\pi} = \sum_{k=0}^\infty \frac{\left(\frac{1}{2}\right)_k^3}
{(k!)^3}\left(\frac{21}{8}k+\frac{5}{16}\right)\left(\frac{1}{64}\right)^k.
$$

\subsection{Cases $N=11$ and $23$}
\quad
\medskip

We first observe that our series obtained in this article
depend entirely on the  value of $\alpha_\ell(1-\alpha_\ell)$. 
The degree of the polynomial satisfied by $\alpha_{\ell}(1-\alpha_{\ell})$
increases in general with $\ell$.
In fact, if $$\frac{\ell+1}{8}=\frac{\nu}{s}$$ with $(\nu,s)=1$,
then $\left(\alpha_\ell(1-\alpha_\ell)\right)^{s/8}$ satisfies
a polynomial equation of degree $\nu$ which can be derived from a Russell-type
modular equation.
For example, $\left(\alpha_7(1-\alpha_7)\right)^{1/8}$ satisfies a
polynomial equation of degree 1 (see \eqref{mod-7}).
For $N=11$ and 23, we have to solve cubic
 polynomial equations since
$12/8=3/2$ and $24/8 = 3/1$. For more discussion on the evaluations of
$\left(\alpha_\ell(1-\alpha_\ell)\right)^{s/8}$
and modular equations, see \cite{Chan-Liaw} and \cite{Russell}.

We now continue with $N=11$. The modular equation given by Ramanujan is
\cite[p.~363, Entry~7(i)]{BerndtIII}
\begin{align*}\left(\alpha(q)\alpha(q^{11})\right)^{1/4}&+
\left((1-\alpha(q))(1-\alpha(q^{11}))\right)^{1/4}\\
&+2\left(16\alpha(q)\alpha(q^{11})
(1-\alpha(q))(1-\alpha(q^{11}))\right)^{1/12}
=1.\end{align*}
This implies that
$$\alpha_{11}(1-\alpha_{11}) =
-\frac{1}{12}\left(27+21\sqrt{33}\right)^{1/3}+
\frac{2}{\left(27+21\sqrt{33}\right)^{1/3}}+\frac{1}{16}.$$
The series we obtain from \eqref{new-pi-expr} and Theorem \ref{Haupt} is
$$\frac{1}{\pi} =
\sum_{k=0}^\infty
\frac{\left(\frac{1}{2}\right)_k^3}{(k!)^3}
\left(k\sqrt{11}\cdot \sqrt{1-\delta}+2\left(
\delta^{1/6}-2\delta^{1/3}+\frac{5}{4}
\delta^{1/2}\right)\right)\delta^k,
$$
where
$$\delta =-\frac{1}{3}\left(27+21\sqrt{33}\right)^{1/3}+
\frac{8}{\left(27+21\sqrt{33}\right)^{1/3}}+\frac{1}{4}.$$

A  modular equation of degree $23$ can be found in
\cite[p.~411, Entry~15(i)]{BerndtIII} and is given by
\begin{align*}\left(\alpha(q)\alpha(q^{23})\right)^{1/8}&+
\left((1-\alpha(q))(1-\alpha(q^{23}))\right)^{1/8}
\\
&+2^{2/3}\left(\alpha(q)\alpha(q^{23})
(1-\alpha(q))(1-\alpha(q^{23}))\right)^{1/24}=1.\end{align*}
Let $X_{23}=4\alpha_{23}(1-\alpha_{23}).$
From the above modular equation of degree 23, we deduce that
$$X_{23}=\frac{1}{384}\frac{5\mu^2-1660-44\mu}{\mu},$$
where
$$\mu=\left(4724+924\sqrt{69}\right)^{1/3}.$$
The associated series we obtain from \eqref{new-pi-expr} and
Theorem~\ref{Haupt} is
$$\frac{1}{\pi} = \sum_{k=0}^\infty \frac{\left(\frac{1}{2}\right)_k^3}
{(k!)^3}\left(k\sqrt{23}\cdot\sqrt{1-X_{23}}+2d_{23}\right)X_{23}^k,
$$
where
$$d_{23}=\sqrt{2}X_{23}^{1/12}-5X_{23}^{1/6}+
\frac{13\sqrt{2}}{2}X_{23}^{1/4}-10X_{23}^{1/3}+5\sqrt{2}X_{23}^{5/12}-
\frac{11}{4}X_{23}^{1/2}.
$$

As mentioned in the beginning of this article, the Borweins remarked
that to derive series for $1/\pi$ given in Theorem \ref{DLpi}
corresponding to $N=11$ and $23$, they needed to rely on Ramanujan's
expressions for $f(11)$ and $f(23)$. We have shown here that this is not
necessary and that these series
can be constructed from the new identities \eqref{H11} and \eqref{H23}.
\smallskip

\begin{Remark}
{\rm We have cited \cite{BerndtIII} for modular equations of various
degrees found by Ramanujan and used them to evaluate $\alpha_N$.
These modular equations are what we called Russell-type modular equations.
They were studied systematically by R.~Russell \cite{Russell}.
In fact, it is possible for us to construct Russell-type modular
equations of any odd prime degree using the results found in \cite{Russell}.
For more details on how to compute such modular equations and their
cubic analogues, see \cite{Chan-Liaw}.
}
\end{Remark}

\begin{Remark}
{\rm We observe that Russell-type modular equations of degree $\ell$
give us polynomials satisfied by $\alpha_\ell$. But in order to
determine $\alpha_\ell$, we still face the problem of finding the
zeroes of polynomials. For example, in the case of $11$ and $23$,
we need to find roots of polynomials of degree 3.
In other words, obtaining $\alpha_n$ using modular equations works
only for relatively small composite or prime $n$. For certain $n$,
especially those which are squarefree, we can
compute $\alpha_n$ without using modular equations.
This requires class field theory and explicit Shimura's reciprocity law.
For more details, see \cite{Chan-JLMS}, \cite{Chan-Gee-Tan}, \cite{Gee},
\cite{Gee-Stevenhagen} and \cite{Stevenhagen}.
}
\end{Remark}

We have seen how Theorem \ref{Haupt} can be used to derive explicit series
for $1/\pi$. We now use these identities to derive examples for
 Theorem~\ref{new-Borweins-pi}. In \cite{PiandAGM}, the Borweins provided
only examples to their series for even $N$. As such, we will first
restrict our attention to the derivation of special cases of
Theorem~\ref{new-Borweins-pi} when $N$ is even.

%

Before we proceed, we observe that if $\ell$ is a prime, then
\begin{equation}\label{evenD}
\widehat{D}_{2\ell}(q)\vartheta_4^2(q)\vartheta_4^2(q^{2\ell})
= \widehat{D}_{\ell}(q) \vartheta_4^2(q)\vartheta_4^2(q^{\ell})
+\ell \widehat{D}_{2}(q^\ell) \vartheta_4^2(q^\ell)\vartheta_4^2(q^{2\ell}).
\end{equation}

From the above, we know that we will need to derive a formula for
$\widehat{D}_2(q)$ and this is given by
\begin{equation}\label{D*2}
\widehat{D}_2^4(q)=\frac{1}{64^2}\dfrac{\alpha^4(q)}{(1-\alpha(q))^3}.
\end{equation}
The relation \eqref{D*2} can be proved by observing that both
$\widehat{D}_2(q^2)$ and $\alpha(q^2)$ are modular functions invariant
under $\Gamma_0(4)$.
Note that $$\widehat{D}_2(q)\neq D_2(-q),$$ even though
$$\widehat{D}_\ell(q)=D_\ell(-q),$$ for odd prime $\ell$
(see \eqref{dlq}).

We are now ready to derive explicit series for $1/\pi$ arising
from Theorem~\ref{new-Borweins-pi} for $N=6,10,14,22$ and 46.

\subsection{Case $N=6$}
\quad
\medskip

{}From \eqref{evenD}, we find that
\begin{equation*}
\widehat{D}_{6}(e^{-\pi/\sqrt{6}})
= \widehat{D}_{3}(e^{-\pi/\sqrt{6}})\frac{\vartheta_4^2(e^{-\pi\sqrt{\frac{3}{2}}})}
{\vartheta_4^2(e^{-\pi\sqrt{6}})}
+3 \widehat{D}_{2}(e^{-\pi\sqrt{\frac{3}{2}}})
\frac{\vartheta_4^2(e^{-\pi\sqrt{\frac{3}{2}}})}{\vartheta_4^2(e^{-\pi/\sqrt{6}})}.
\end{equation*}
In order to derive a series for $1/\pi$ using Theorem~\ref{new-Borweins-pi},
we will need to derive the following identities:
\begin{subequations}\label{n6ids}
\begin{align}
\alpha_6 &= 35+24\sqrt{2}-20\sqrt{3}-14\sqrt{6} \label{al6},\\
 \label{al23}
\alpha_{2/3} &= 35-24\sqrt{2}-20\sqrt{3}+14\sqrt{6},\\
\widehat{D}_3^2\!\left(e^{-\pi/\sqrt{6}}\right) &=\left(\frac{5}{2}+
\frac{3}{2}\sqrt{3}\right)^2 \label{D3*6},\\
\widehat{D}_2^2\!\left(e^{-\pi\sqrt{\frac{3}{2}}}\right) &=
-\frac{41}{16}\sqrt{6}+\frac{99}{16}-\frac{35}{8}\sqrt{2}+\frac{29}{8}\sqrt{3}
\label{D2*6},\\
\frac{\vartheta_4^4\!\left(e^{-\pi\sqrt{\frac{3}{2}}}\right)}
{\vartheta_4^4\!\left(e^{-\pi/\sqrt{6}}\right)}
&=5+\frac{8}{3}\sqrt{3}+2\sqrt{6}+\frac{10}{3}\sqrt{2} \label{tpart1},\\
\frac{\vartheta_4^4\!\left(e^{-\pi\sqrt{\frac{3}{2}}}\right)}
{\vartheta_4^4\!\left(e^{-\pi\sqrt{6}}\right)}
&=-3+2\sqrt{2}+2\sqrt{3}-\sqrt{6}.\label{tpart2}\end{align}
\end{subequations}
Assuming that the above identities hold, we find that
\begin{align*}\widehat{D}_6^2\!\left(e^{-\pi/\sqrt{6}}\right)&=
\left(\widehat{D}_3\!\left(e^{-\pi/\sqrt{6}}\right)
\frac{\vartheta_4^2\!\left(e^{-\pi\sqrt{\frac{3}{2}}}\right)}
{\vartheta_4^2\left(e^{-\pi\sqrt{6}}\right)}
+3
\widehat{D}_2\left(e^{-\pi\sqrt{\frac{3}{2}}}\right)
\frac{\vartheta_4^2\left(e^{-\pi\sqrt{\frac{3}{2}}}\right)}{\vartheta_4^2\left(e^{-\pi/\sqrt{6}}\right)}\right)^2\\
&=\frac{111}{16}+5\sqrt{2}+\frac{33}{8}\sqrt{3}+\frac{45}{16}\sqrt{6}.
\end{align*}
Therefore,
\begin{align*}
\widehat{a}_6 &= \frac{2}{\sqrt{6}}\sqrt{\frac{\alpha_6}{1-\alpha_6}}
\left(-\widehat{D}_6(e^{-\pi/\sqrt{6}})\right)+\frac{1}{2(1-\alpha_6)}\\
&=-\left(\frac{1}{4}+\frac{1}{6}\sqrt{6}-\frac{1}{6}\sqrt{3}\right)+
\frac{1}{2(1-\alpha_6)}\\
&= \frac{2}{3}\sqrt{3}-\frac{5}{12}\sqrt{6}.
\end{align*}
Now, using the value of $\alpha_6$, we immediately compute
$$\frac{1+\alpha_6}{1-\alpha_6} = \sqrt{3}\left(2-\sqrt{2}\right)
\qquad \text{and}\qquad {-}4\frac{\alpha_6}{(1-\alpha_6)^2}=-17+12\sqrt{2}.$$
Hence, by Theorem~\ref{new-Borweins-pi}, we obtain the following identity
$$\frac{1}{\sqrt{6}\pi}
=\sum_{k=0}^\infty
\frac{\left(\frac{1}{2}\right)_k^3}{(k!)^3}
\left(\sqrt{3}\left(2-\sqrt{2}\right)k+\frac{2}{3}\sqrt{3}-
\frac{5}{12}\sqrt{6}\right)\left({-}17+12\sqrt{2}\right)^k,$$
which is \eqref{Borweins-6} in the introduction.

We still have to show the identities in \eqref{n6ids}.
Observe that using \eqref{alphac}, Jacobi's identity \eqref{Jacobi2} and
the product representations of $\vartheta_j(q)$ for $j=2,3,4$ given in
\eqref{T2-prod-1}, \eqref{T3-prod} and \eqref{T4-prod-1}, we find that
\begin{equation}\label{gnmod} {-}4\frac{\alpha(q)}{\left(1-\alpha(q)\right)^2}
={-}64\frac{\eta^{24}(\tau)}{\eta^{24}(\tau/2)}.\end{equation}
We now recall the following modular equation of Ramanujan which is a
consequence of \cite[Chapter~17, Entry~12]{BerndtIII}, namely,
\begin{equation}\label{UV}
U(\tau)+\frac{1}{U(\tau)}-2=V(\tau)+\frac{64}{V(\tau)}+16,
\end{equation}
where
$$U(\tau)=\left(\frac{\eta(\tau)\eta(6\tau)}{\eta(2\tau)\eta(3\tau)}\right)^{12}
\quad\;\text{and}\quad\;
V(\tau)= \left(\frac{\eta(\tau)\eta(3\tau)}
{\eta(2\tau)\eta(6\tau)}\right)^{6}.$$
Substituting $\tau=i/\sqrt{6}$ in \eqref{UV}, we find, using the
evaluation formula \eqref{3-theta3-explicit} for the $\eta$-function, that
$$V(i/\sqrt{6})=8.$$
This implies that
$$4\frac{\alpha_{6}}{\left(1-\alpha_{6}\right)^2} =
64\left(\frac{\eta(i\sqrt{6})}{\eta(i\sqrt{3/2})}\right)^{24}
=17-12\sqrt{2},$$
and
$$4\frac{\alpha_{2/3}}{\left(1-\alpha_{2/3}\right)^2} =
64\left(\frac{\eta(i\sqrt{2/3})}{\eta(i/\sqrt{6})}\right)^{24}
=17+12\sqrt{2}.$$
This implies \eqref{al6} and
the identity \eqref{al23}.

The above method of deriving $\alpha_{2\ell}$ using $\alpha_{2/\ell}$
and a modular equation is due to Ramanujan. For more details,
see \cite[Section 2]{Ram-mod} where $\alpha_{10}$ is derived.

Identity \eqref{D3*6} follows from the identity
\begin{align*}\widehat{D}_3^2(e^{-\pi/\sqrt{6}}) =
\frac{1}{16}\sqrt{16\frac{\alpha_{1/6}\alpha_{3/2}}
{(1-\alpha_{1/6})^2(1-\alpha_{3/2})^2}}
&=\frac{1}{16}\sqrt{16\frac{(1-\alpha_6)(1-\alpha_{2/3})}
{\alpha_6^2\alpha_{2/3}^2}}\\
&=\left(\frac{5}{2}+\frac{3}{2}\sqrt{3}\right)^2,\end{align*}
where we have used the identity (see \eqref{alpha-transform})
 $$1-\alpha_{1/r} = \alpha_r.$$
Identity \eqref{D2*6} follows immediately from \eqref{D*2} and \eqref{al23}.

To prove \eqref{tpart1} and \eqref{tpart2}, we observe that by \eqref{Z-t-1},
it suffices to
compute
$$\frac{\vartheta_4^4(e^{-\pi\sqrt{2/3}})}{\vartheta_4^4(e^{-\pi/\sqrt{6}})}.$$
To finish this final task, we recall two identities, namely
(see for example \cite[(2.1.6)]{PiandAGM})
\begin{equation}\label{Arithmetic}\vartheta_3^2(q)+\vartheta_4^2(q) =
2\vartheta_3(q^2),\end{equation}
and \cite[p.~214, (24.15)]{BerndtIII}
\begin{equation}\label{mod2eq}
\frac{\vartheta_3^2(q)}{\vartheta_3^2(q^2)}=\frac{1-\sqrt{\alpha(q^2)}}
{\sqrt{1-\alpha(q)}}.
\end{equation}
From \eqref{alphac}, \eqref{Jacobi2}, \eqref{Arithmetic} and \eqref{mod2eq}
we conclude that
$$\frac{\vartheta_4^2(q)}{\vartheta_4^2(q^2)} =
\frac{1}{\sqrt{1-\alpha(q^2)}}\left(2-\frac{\vartheta_3^2(q)}
{\vartheta_3^2(q^2)}\right)
=\frac{1}{\sqrt{1-\alpha(q^2)}}\left(2-\frac{1-\sqrt{\alpha(q^2)}}
{\sqrt{1-\alpha(q)}}\right).$$
This implies that
\begin{align*}\frac{\vartheta_4^4(e^{-\pi/\sqrt{6}})}
{\vartheta_4^4(e^{-\pi\sqrt{2/3}})}& = \frac{1}{{1-\alpha_{2/3}}}
\left(2-\frac{1-\sqrt{\alpha_{2/3}}}{\sqrt{1-\alpha_{1/6}}}\right)^2\\
&=-3-2\sqrt{2}+2\sqrt{3}+\sqrt{6}.\end{align*} As indicated earlier,
\eqref{tpart1} and \eqref{tpart2} follow from this computation.

\medskip
\subsection{Case $N=10$}
\quad

\medskip
We now discuss the other cases of $N$, namely, $N=10, 14, 22$ and $46$.
It is clear from our discussion of the case $N=6$, to derive a series
for $1/\pi$ from $\widehat{D}_N(q)$, we need, with help of the identities
from Theorem~\ref{Haupt}, only the
values for $\alpha_{2p}$ and $\alpha_{2/p}$.  In the case of $N=6$,
we use modular equation \eqref{UV} to derive $\alpha_6$ and $\alpha_{2/3}$.
We now discuss another method of deriving $\alpha_{2p}$ and $\alpha_{2/p}$
and we will illustrate this alternative method using the case $N=10$.
Let $\xi(q)$ be the right-hand side of \eqref{gnmod}, namely,
$$\xi(q) = -64\frac{\eta^{24}(\tau)}{\eta^{24}(\tau/2)}.$$
Let $\xi_n=\xi(e^{-\pi \sqrt{n}}).$ Then it can be shown (see
\cite{Chan-JLMS} and \cite{Chan-Gee-Tan}) that
$$\frac{\xi_{10}}{\xi_{2/5}}+\frac{\xi_{2/5}}{\xi_{10}}= 103682.$$
Next, using \eqref{eta-transform}, we deduce that for any
positive real number $n$,
\begin{equation}\label{g2ng2overn} \xi_{2n}\xi_{2/n}=1.\end{equation}
Solving the above equation and using \eqref{g2ng2overn}
for any positive integer $n$,  we deduce that
$$\xi_{10}^2 = 51841-23184\sqrt{5},$$ which
implies that
$$\xi_{10} = -161+72\sqrt{5}.$$
Using \eqref{gnmod}, we deduce that
$$\alpha_{10} = 323+144\sqrt{5}-102\sqrt{10}-228\sqrt{2}.$$
Similarly, we obtain
$$\alpha_{2/5} =  323-144\sqrt{5}-102\sqrt{10}+228\sqrt{2}.$$
Using these values and following what we have done for $N=6$,
we deduce that
$$\frac{1}{\sqrt{10}\pi}
=\sum_{k=0}^\infty
\frac{\left(\frac{1}{2}\right)_k^3}{(k!)^3}\left(\left(3\sqrt{10}-
6\sqrt{2}\right)k+ \frac{23}{20}\sqrt{10}-\frac{5}{2}\sqrt{2}\right)
\left(-161+72\sqrt{5}\right)^k.$$

\medskip
\subsection{Case $N=14$}
\quad

\medskip
The series for $1/\pi$ for $N=14$ is not given by the Borweins.
We now supply the missing series.
We find, following the method illustrated in
\cite{Chan-JLMS} and \cite{Chan-Gee-Tan}, that
$$\left(\frac{\xi_{14}}{\xi_{2/7}}\right)^{1/24}+
\left(\frac{\xi_{2/7}}{\xi_{14}}\right)^{1/24}= 1+\sqrt{2}.$$
This yields
$$\xi_{14} = -\left(\frac{1}{2}\sqrt{2}+\frac{1}{2}-
\frac{1}{2}\sqrt{-1+2\sqrt{2}}\right)^{12}.$$
Using a formula of Ramanujan \cite[Theorem~1.2]{Berndt-Chan-singular},
we deduce that
\begin{equation*}\alpha_{14} =\left(-2\sqrt{2}-2+
\sqrt{8\sqrt{2}+11}\right)^2\left(\sqrt{10+8\sqrt{2}}-
\sqrt{8\sqrt{2}+11}\right)^2.
\end{equation*}
Similarly, we find that
\begin{equation*}\alpha_{2/7} =\left(-2\sqrt{2}-2+
\sqrt{8\sqrt{2}+11}\right)^2\left(\sqrt{10+8\sqrt{2}}+
\sqrt{8\sqrt{2}+11}\right)^2.
\end{equation*}
From the values of $\alpha_{14}$, we should expect the series for $1/\pi$
to be very complicated. We will  list the algebraic numbers needed to
generate the series:
\begin{align*}
&X_{14}=-\frac{4\alpha_{14}}{(1-\alpha_{14})^2}=-
\left(\frac{1}{2}\sqrt{2}+\frac{1}{2}-\frac{1}{2}
\sqrt{-1+2\sqrt{2}}\right)^{12},\\
&b_{14}= \frac{1+\alpha_{14}}{1-\alpha_{14}},
\\
&V_{14}=\frac{\vartheta_4^4(e^{-\pi\sqrt{7/2}})}{\vartheta_4^4(e^{-\pi/\sqrt{14}})}
\left(\widehat{D}_2(e^{-\pi\sqrt{7/2}})\right)^2
=\frac{1}{56(1-\alpha_{14})},\\
&U_{14}=\frac{\vartheta_4^4(e^{-\pi\sqrt{7/2}})}{\vartheta_4^4(e^{-\pi\sqrt{14}})}
\left(\widehat{D}_7(e^{-\pi/\sqrt{14}})\right)^2
=4 h_{14}^2(1+3h_{14})^2\sqrt{\alpha_{2/7}},
\end{align*}
where
$$h_{14}= \left(\frac{(1-\alpha_{14})(1-\alpha_{2/7})}
{16^2\alpha_{14}^2\alpha_{2/7}^2}\right)^{1/8}
=\sqrt{\frac{11}{4}+\frac{7}{4}\sqrt{2}+\frac{1}{4}\sqrt{217+154\sqrt{2}}}.$$
Then
$$\frac{1}{\sqrt{14}\pi}
=\sum_{k=0}^\infty
\frac{\left(\frac{1}{2}\right)_k^3}{(k!)^3}\left(b_{14}k+
\frac{2}{\sqrt{14}}\sqrt{\frac{\alpha_{14}}{1-\alpha_{14}}}
\left(-\sqrt{U_{14}}-7\sqrt{V_{14}}\right)+
\frac{1}{2(1-\alpha_{14})}\right) X_{14}^k.$$
Note that in the case of $N=14$, it is difficult to derive the series
without knowing the explicit formula
given by Theorem~\ref{new-Borweins-pi} and the corresponding identities
given in Theorem~\ref{Haupt}.
The complexity of the constants arising in this series is perhaps why
the series is not given by the Borweins in their book.

\medskip
\subsection{Case $N=22$}
\quad

\medskip
Following the method illustrated in
\cite{Chan-JLMS} and \cite{Chan-Gee-Tan}, we find that
$$\sqrt{\frac{\xi_{22}}{\xi_{2/11}}}+\sqrt{\frac{\xi_{2/11}}{\xi_{22}}}= 39202.$$
This yields
$$\xi_{22} = -\left(19601-13860\sqrt{2}\right),$$
and
$$\alpha_{22}=39203+27720\sqrt{2}-11820\sqrt{11}-8358\sqrt{22}.$$
The corresponding series is
$$\frac{1}{2(-5\sqrt{2}+7)\pi}=
\sum_{k=0}^\infty
\frac{\left(\frac{1}{2}\right)_k^3}{(k!)^3}
\left(-33k+ \frac{17\sqrt{2}-33}{4}\right)\left(-19601+13860\sqrt{2}\right)^k.
$$

\medskip
\subsection{Case $N=46$}
\quad

\medskip
The series for $1/\pi$ for $N=46$ is not given by the Borweins.
Following the method illustrated in
\cite{Chan-JLMS} and \cite{Chan-Gee-Tan}, we find that
$$\left(\frac{\xi_{46}}{\xi_{2/23}}\right)^{1/24}+
\left(\frac{\xi_{2/23}}{\xi_{46}}\right)^{1/24}= 3+\sqrt{2}.$$
This implies that
$$\xi_{46} = -\left(\frac{3}{2}+\frac{\sqrt{2}}{2}-
\frac{1}{2}\sqrt{7+6\sqrt{2}}\right)^{12}.$$
Therefore,
$$\alpha_{46}=\left(26+18\sqrt{2}-3\sqrt{147+104\sqrt{2}}\right)^2
\left(\sqrt{1332+936\sqrt{2}}-3\sqrt{147+104\sqrt{2}}\right)^2,$$
and
$$\alpha_{2/23} = \left(26+18\sqrt{2}-3\sqrt{147+104\sqrt{2}}\right)^2\!
\left(\sqrt{1332+936\sqrt{2}}+3\sqrt{147+104\sqrt{2}}\right)^2.$$
The following constants will give rise to an explicit series
for $1/\pi$ associated with $N=46$:

\begin{align*}
b_{46}={}& 78\sqrt{147+104\sqrt{2}}+54\sqrt{2}\sqrt{147+104\sqrt{2}}\\
&-3\sqrt{2}\sqrt{147+104\sqrt{2}}\sqrt{661+468\sqrt{2}},\\[.1em]
X_{46}={}&-\left(\frac{3}{2}+\frac{\sqrt{2}}{2}-
\frac{1}{2}\sqrt{7+6\sqrt{2}}\right)^{12},\\
V_{46}
={}&\frac{1}{2^3\cdot 23(1-\alpha_{46})},
 \\
U_{46}={}&
\left(2h_{46}\left(1+5h_{46}+13h_{46}^2+20h_{46}^3+20h_{46}^4+
11h_{46}^5\right)\right)^2\sqrt{\alpha_{2/23}},
\end{align*}
where
$$h_{46}= \left(\frac{(1-\alpha_{46})(1-\alpha_{2/23})}
{16^2\alpha_{46}^2\alpha_{2/23}^2}\right)^{1/24}.$$
Then
\begin{equation*}
\frac{1}{\sqrt{46}\pi}
=\sum_{k=0}^\infty
\frac{\left(\frac{1}{2}\right)_k^3}{(k!)^3}
\left(b_{46}k+ \frac{2}{\sqrt{46}}\sqrt{\frac{\alpha_{46}}{1-\alpha_{46}}}
\left(-\sqrt{U_{46}}-23\sqrt{V_{46}}\right)+
\frac{1}{2(1-\alpha_{46})}\right) X_{46}^k.
\end{equation*}

We have, in our attempt to prove some of the Borweins' identities
\cite[p.~172, Tables~5.2a, 5.2b]{PiandAGM}, used \eqref{new-pi-expr}
to derive series for $1/\pi$ when $N$ is odd and
 \eqref{newBpi} when $N$ is even.
We would like to emphasize here that these restrictions are not necessary.
Indeed if we consider $N=6, 10$ and 22, we
obtain from \eqref{new-pi-expr} the following series for $1/\pi$:


The identity
\begin{equation*}
\frac{1}{\sqrt{N}\pi}=
\sum_{k=0}^\infty
\frac{\left(\frac{1}{2}\right)_k^3}{(k!)^3}
\left(b_N k+a_N\right)X_N^k\end{equation*}
is true when
\begin{align*}
b_6 &=-69-48\sqrt{2}+40\sqrt{3}+28\sqrt{6},\\
a_6 &= -30-21\sqrt{2}+\frac{52}{3}\sqrt{3}+\frac{73}{6}\sqrt{6},\\
X_6 &= -18872-13344\sqrt{2}+10896\sqrt{3}+7704\sqrt{6},\\
b_{10} &= -645+456\sqrt{2}+204\sqrt{10}-288\sqrt{5},\\
a_{10}&= -290+205\sqrt{2}+\frac{917}{10}\sqrt{10}-\frac{648}{5}\sqrt{5},\\
X_{10} &= -1662776+1175760\sqrt{2}+525816\sqrt{10}-743616\sqrt{5},
\intertext{and}
b_{22} &= -78405-55440\sqrt{2}+23640\sqrt{11}+16716\sqrt{22},\\
a_{22} &= -36542-25839\sqrt{2}+\frac{121196}{11}\sqrt{11}+
\frac{171397}{22}\sqrt{22},\\
X_{22} &= -24589219256-17387203680\sqrt{2}+7413928560\sqrt{11}+
5242439160\sqrt{22}.
\end{align*}

Similarly, we also found series associated with \eqref{newBpi}
when $N$ is odd. For example, when $N=3$ and $7$, we have
relatively simple series which were missing so far. These are respectively
$$\frac{1}{\sqrt{3}\pi}
=\sum_{k=0}^\infty
\frac{\left(\frac{1}{2}\right)_k^3}{(k!)^3}\left(\left(\sqrt{15}-
8\sqrt{3}\right)k+6-\frac{10}{3}\sqrt{3}\right)
\left(-416+240\sqrt{3}\right)^k,$$
and
$$\frac{1}{\sqrt{7}\pi}
=\sum_{k=0}^\infty
\frac{\left(\frac{1}{2}\right)_k^3}{(k!)^3}
\left(\left(255-96\sqrt{7}\right)k+112-\frac{296}{7}\sqrt{7}\right)
\left(-129536+48960\sqrt{7}\right)^k.$$
 There is also a series for the case $N=5$ and it is given by
\begin{align*}\frac{1}{\sqrt{N}\pi}&=
\sum_{k=0}^\infty
\frac{\left(\frac{1}{2}\right)_k^3}{(k!)^3}
\left(\widehat{b}_N k+\widehat{a}_N\right)\widehat{X}_N^k,
\intertext{where}
\widehat{b}_{5} &=
35+16\sqrt{5}-72\sqrt{\sqrt{5}-2}-32\sqrt{5\sqrt{5}-10},\\[.1em]
\widehat{a}_{5} & = 15+\frac{34}{5}\sqrt{5}-18\sqrt{\sqrt{5}-2}-
8\sqrt{5\sqrt{5}-10}-\frac{1}{5}\sqrt{1990+890\sqrt{5}},\\[.1em]
\widehat{X}_{5} &=
-4936-2208\sqrt{5}+10160\sqrt{\sqrt{5}-2}+4544\sqrt{5\sqrt{5}-10}.
\end{align*}




We note that identities such as those given in
Theorem~\ref{Haupt} exist only when $\Gamma_0(2\ell)+W_\ell$
has genus $0$, or according to \cite[p.~14]{Lang-Chan}, when
$\ell=3,5,7,11,23$. In order to compute $D_\ell(q)$ for primes
other than $3,5,7,11$
 and $23$, we introduce modular functions similar to those
used by Ramanujan in his representations of
$f(\ell)$. 


%


\medskip

%
%

In Table 1, we state the value of $N$ and in each entry, set
$$\alpha=\alpha(q),\quad \beta=\alpha(q^N).$$

\quad
\vskip2.5em

\begin{center} \textbf{Table 1: Table of identities for $D_N(q)$}\end{center}

\smallskip
{\footnotesize
\begin{center}
\noindent\makebox[\linewidth]{\rule{14.5 cm}{0.4pt}}
\end{center}

\begin{center} {$N=3$}\end{center}

\medskip
\noindent
$$D_3(q) = -\frac{\left(\alpha\beta(1-\alpha)
(1-\beta)\right)^{1/4}}{2}.$$

\begin{center}
\noindent\makebox[\linewidth]{\rule{14.5 cm}{0.4pt}}
\end{center}

\begin{center} {$N=5$}\end{center}

\noindent  Let
$$X=\dfrac{\left(2^{10}\alpha\beta(1-\alpha)(1-\beta)\right)^{1/6}}{8},$$
then
$$D_5^2(q) = 4X^2(1-4X).$$

\begin{center}
\noindent\makebox[\linewidth]{\rule{14.5 cm}{0.4pt}}
\end{center}

\begin{center} {$N=7$}\end{center}

\noindent Let $$X=\dfrac{\left(\alpha\beta(1-\alpha)
(1-\beta)\right)^{1/8}}{2},$$ then
$$D_7(q) = -2X(1-3X).$$

\begin{center}
\noindent\makebox[\linewidth]{\rule{14.5 cm}{0.4pt}}
\end{center}

\begin{center} {$N=11$}\end{center}

\noindent Let $$X=\dfrac{\left(2^{4}\alpha\beta(1-\alpha)
(1-\beta)\right)^{1/12}}{2},$$ then $$D_{11}(q) = -2X(1-4X+5X^2).$$

\begin{center}
\noindent\makebox[\linewidth]{\rule{14.5 cm}{0.4pt}}
\end{center}

\begin{center} {$N=13$}\end{center}

\noindent
Let \begin{align*}
X&=\dfrac{\left(\alpha\beta(1-\alpha)(1-\beta)\right)^{1/2}}{16},
\intertext{and}
Y&=\dfrac{1-(\alpha\beta)^{1/2}-\left((1-\alpha)
(1-\beta)\right)^{1/2}}{8},\end{align*} then
\begin{align*}
&10XD_{13}^4(q)+(-116 X-404XY^2+528XY-Y^2+Y^3+1280X^2)D_{13}^2(q)\\
&-16X-20Y^5-16000X^2Y-176XY^2+2112X^2
+4Y^4 \\ &+37824X^2Y^2-3504XY^3+8240XY^4
-23040X^3+192XY=0.
\end{align*}
\begin{center}
\noindent\makebox[\linewidth]{\rule{14.5 cm}{0.4pt}}
\end{center}

\begin{center} {$N=17$}\end{center}

\noindent Let \begin{align*}
X&=\dfrac{\left(2^4\alpha\beta(1-\alpha)
(1-\beta)\right)^{1/6}}{4},\intertext{and}
Y&=\dfrac{1-(\alpha\beta)^{1/2}-\left((1-\alpha)(1-\beta)\right)^{1/2}}{8},
\end{align*} then

$$D_{17}^2(q) =
4\,\dfrac{64X^3Y-11X^2Y-4X^2-24XY+31XY^2-32X^3+Y^2+3X-8Y^2}{1-Y+5X}.$$

\begin{center}
\noindent\makebox[\linewidth]{\rule{14.5 cm}{0.4pt}}
\end{center}

\begin{center} {$N=19$}\end{center}

\noindent Let
\begin{align*} X&=\dfrac{\left(\alpha\beta(1-\alpha)
(1-\beta)\right)^{1/4}}{4},\intertext{and}
Y&=\dfrac{1-(\alpha\beta)^{1/4}-\left((1-\alpha)
(1-\beta)\right)^{1/4}}{4},\end{align*}
then
$$D_{19}(q) = \dfrac{2X+2Y+16Y^3-10Y-18XY}{Y-1}.$$
\begin{center}
\noindent\makebox[\linewidth]{\rule{14.5 cm}{0.4pt}}
\end{center}

\begin{center} {$N=23$}\end{center}

\noindent Let
$$X=\dfrac{\left(2^{16}\alpha\beta(1-\alpha)
(1-\beta)\right)^{1/24}}{2},$$
then
$$D_{23}(q) = -2X(1-5X+13X^2-20X^3+20X^4-11X^5).$$

\begin{center}
\noindent\makebox[\linewidth]{\rule{14.5 cm}{0.4pt}}
\end{center}

\begin{center} {$N=29$}\end{center}

\noindent Let
\begin{align*}
X&=\left(\dfrac{\alpha\beta(1-\alpha)(1-\beta)}{256}\right)^{1/6},
\intertext{and}
Y&=\dfrac{1-(\alpha\beta)^{1/2}-\left((1-\alpha)
(1-\beta)\right)^{1/2}}{8},\end{align*}
then
$$A_2D_{29}^4(q)+A_1D_{29}^2(q)+A_0 = 0,$$
where
\begin{align*}
A_2&=-585689508612X^2,\\[.2em]
A_1&=123736544264XY+3702335691264X^2+97491959398X\\
&\quad\;+134904595824360X^4
 -44395652981864X^2Y^2-432321617914Y^3\\
 &\quad\; -42626822690432X^5-29875947341036X^3
-9779263696654XY^2\\
&\quad\; +41705207079730X^2Y-8251360353152X^4Y+5451791661904XY^3\\
&\quad-176409878302552X^3Y,\\[.2em]
A_0&=13753900119256887X^2Y^3-4877930791543X\\
&\quad\;+2618284012843192X^3Y+2305243907550368XY^3\\
&\quad\;-700939761749206XY^2+505394444931798X^2Y\\
&\quad\;+96399537859592XY-4086296883979928X^2Y^2\\
&\quad\;-14225126607270367X^3Y^2+4709822410848252X^5Y\\
&\quad\;+25397795278722548X^3Y^3-16793873356376932X^4Y^2\\
&\quad\;+1068896146837092XY^5-175149710486642X^3\\
&\quad\;+5709212469240785X^4Y-3216747114074433XY^4\\
&\quad\;-16394572380315964X^2Y^4-1065063721978775X^5\\
&\quad\;-368335914073064X^4-19806094957276X^2\\
&\quad\;+6607217263199Y^5
-204688220825404X^6-25269791081528Y^6.
\end{align*}

\begin{center}
\noindent\makebox[\linewidth]{\rule{14.5 cm}{0.4pt}}
\end{center}

\begin{center} {$N=31$}\end{center}

\noindent
Let
\begin{align*}
X&=\dfrac{\left(\alpha\beta(1-\alpha)(1-\beta)\right)^{1/8}}{2},\intertext{and}
Y&=\dfrac{1-(\alpha\beta)^{1/8}-\left((1-\alpha)(1-\beta)\right)^{1/8}}{8},
\end{align*} then
$$D_{31}(q) = -82X^2+22X-1536Y^3-8Y-32XY+160Y^2+896XY^2.$$

\begin{center}
\noindent\makebox[\linewidth]{\rule{14.5 cm}{0.4pt}}
\end{center}
\medskip
\medskip
}

 Using the identity associated with $D_{29}(q)$, we obtain
Borweins' series \cite[p.~172]{PiandAGM} associated with $N=58$ in
Theorem~\ref{new-Borweins-pi}, namely,
\begin{align*}\frac{1}{\sqrt{N}\pi}&=
\sum_{k=0}^\infty
\frac{\left(\frac{1}{2}\right)_k^3}{(k!)^3}
\left(\widehat{b}_N k+\widehat{a}_N\right)\widehat{X}_N^k,\intertext{where}
\widehat{b}_{58} &= -6930\sqrt{2}+1287\sqrt{58},\\
\widehat{a}_{58} & = -\frac{6351}{2}\sqrt{2}+\frac{68403}{116}\sqrt{58},\\
\widehat{X}_{58} &= -192119201+35675640\sqrt{29}.
\end{align*}
As in the case of $N=13$ for Theorem \ref{DLpi},
the Borweins derived the above series without the knowledge of 
$f(29)$ which is not listed in
Ramanujan's table for $f(\ell)$.


\begin{Remark}{\rm Note that the above table contains identities
analogous to Ramanujan's table for $f(\ell)$. In particular, using
the expression for $D_{13}(q)$, we obtain the series given in
\eqref{Classic-13}, etc. The identities in the table were found
with the assistance of computer algebra, more precisely with
F.G.~Garvan's \texttt{qseries} package
(available at \texttt{http://qseries.org/fgarvan/qmaple/qseries/}),
using suitable functions such as $X$ and $Y$ given in the table.
Once an identity is found, the validity of the identity can be established
by first deriving a modular equation from the identity and
then by verifying the respective modular equation by the standard
technique of comparing the $q$-series expansions
of the modular functions which appear in the modular equation.
The identity to be proved is then one of the solutions of the
modular equation.
}
\end{Remark}

\section{Series for $1/\pi$ associated with the
cubic theta function $a(q)$}

In this section, we consider a cubic analogue of \eqref{new-pi-expr}
and \eqref{newBpi}. 
Let $$a(q) = \sum_{m=-\infty}^\infty\sum_{n=-\infty}^\infty {q}^{m^2+mn+n^2},$$
and
$$\frac{1}{\alpha^\dagger(q^2)} = 1+
\frac{1}{27} \frac{\eta^{12}(\tau)}{\eta^{12}(3\tau)}.$$
The analogues of Theorems \ref{DLpi} and \ref{new-Borweins-pi}
are respectively given as follows:
\begin{Theorem}\label{new-cubic-pi-expr}
Let $N\geq 2$ be a positive integer,
\begin{align*}
\alpha_N^\dagger &=\alpha^\dagger\Big(e^{-2\pi \sqrt{N/3}}\Big),\intertext{and}
C_N({q}) &= \frac{1}{a^2(q)a^2(q^N)}\,
\operatorname{det}\!\left(\begin{matrix} a(q) & a(q^N) \\
{q}\dfrac{da(q)}{d{q}} & q\dfrac{da(q^N)}{d{q}}\end{matrix}\right).
\end{align*}
Then
\begin{equation}\label{newcubicpi}\sqrt{\frac{3}{N}}\frac{1}{2\pi} =
\sum_{k=0}^\infty \frac{\left(\frac{1}{2}\right)_k
\left(\frac{1}{3}\right)_k\left(\frac{2}{3}\right)_k}
{(k!)^3}\left(b^\dagger_N k+a^\dagger_N\right) \left(X^\dagger_N\right)^k,
\end{equation}
where
\begin{align*}b^\dagger_N&=1-2\alpha_N^{\dagger},\\
a^\dagger_N &=
-\frac{C_N(q)}{\sqrt{N}}\bigg|_{{q}=e^{-2\pi/\sqrt{3N}}},\\
\intertext{and}
X^\dagger_N&=4\alpha_N^{\dagger}\left(1-\alpha_N^{\dagger}\right).\end{align*}
\end{Theorem}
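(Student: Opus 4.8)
The plan is to deduce Theorem~\ref{new-cubic-pi-expr} from Theorem~\ref{CCL-main} exactly as Theorem~\ref{DLpi} was deduced, but now with the ingredients coming from the Borweins' cubic theory of theta functions. First I would set $Z(q)=a(q)$ — wait, more precisely I would take $Z(q)$ to be whatever cubic object plays the role that $\vartheta_3^4$ played in the classical case; the natural candidate is $a^2(q)$ (or $a(q)$ up to normalization), since in the cubic theory it is $a(q)$, not its square, whose square satisfies the hypergeometric expansion. Concretely, the key structural facts I need are: (i) a transformation formula of the shape $r\,Z(e^{-2\pi\sqrt{r/s}})=Z(e^{-2\pi/\sqrt{rs}})$ with $s=3$, which should follow from the known behaviour of $a(q)$ under $\tau\mapsto -1/(3\tau)$; (ii) a cubic analogue of \eqref{theta3-X}, namely an expansion
$$Z(q)=\sum_{k=0}^\infty \frac{\left(\tfrac12\right)_k\left(\tfrac13\right)_k\left(\tfrac23\right)_k}{(k!)^3}\,X^k(q),$$
which is precisely the cubic transfer principle of the Borweins and Borwein--Garvan, with $X(q)=4\alpha^\dagger(q)(1-\alpha^\dagger(q))$; and (iii) a differentiation formula $q\,dX/dq = (1-2\alpha^\dagger(q))\,X(q)Z(q)$, the cubic counterpart of \eqref{dX}, giving $U(q)=1-2\alpha^\dagger(q)$ and hence $b^\dagger_N=1-2\alpha^\dagger_N$.

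Granting (i)--(iii), I would verify that $s=3$ and $X^\dagger_N=X(e^{-2\pi\sqrt{N/3}})=4\alpha^\dagger_N(1-\alpha^\dagger_N)$, so that the prefactor $\sqrt{s/N}/(2\pi)$ in Theorem~\ref{CCL-main} becomes $\sqrt{3/N}/(2\pi)$, matching \eqref{newcubicpi}. It then remains to identify $a_N$ from Theorem~\ref{CCL-main} with $a^\dagger_N=-C_N(q)/\sqrt N$ at $q=e^{-2\pi/\sqrt{3N}}$. This is the step I would carry out in close parallel with the proof of Theorem~\ref{DLpi}: with $M_N(q)=Z(q)/Z(q^N)$ one computes
$$\frac{1}{M_N(q)}\,q\frac{dM_N(q)}{dq} = q\frac{d}{dq}\log a(q) - N\,q^N\frac{d}{dq^N}\log a(q^N),$$
and recognizing this as a multiple of $a(q)a(q^N)C_N(q)$ (by expanding the determinant defining $C_N$), together with the chain rule $q\,dM_N/dq = (dM_N/dX)\,q\,dX/dq = U(q)X(q)Z(q)\,dM_N/dX$, one extracts $U(q)X(q)\,dM_N/dX$ as an explicit multiple of $C_N(q)$ times a ratio $Z(q)^2/Z(q^N)$ or similar. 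Specializing $q=e^{-2\pi/\sqrt{3N}}$ and applying the transformation formula (i) to convert $Z(e^{-2\pi/\sqrt{3N}})/Z(e^{-2\pi\sqrt{N/3}})$ into the numerical factor $N$ should collapse everything to $a_N = -C_N(e^{-2\pi/\sqrt{3N}})/\sqrt N$, which is exactly $a^\dagger_N$.

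The main obstacle is bookkeeping of the normalization constants rather than anything conceptual: one must get the exponent of $Z$ right (whether it is $a$, $a^2$, or $a^2$ with a scalar), track the factor $2$ discrepancies between the $q$ and $q^2$ conventions (note $q=e^{\pi i\tau}$ classically but the cubic statement is phrased in $e^{-2\pi\sqrt{N/3}}$, so $s$ and the argument substitutions must be handled carefully), and confirm that the $\alpha^\dagger$ defined via $1/\alpha^\dagger(q^2)=1+\tfrac1{27}\eta^{12}(\tau)/\eta^{12}(3\tau)$ is indeed the cubic singular modulus for which the hypergeometric expansion and the differentiation formula hold in the normalization matching $C_N$. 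Once the analogues of \eqref{theta3-X} and \eqref{dX} are pinned down in the cubic setting — which is classical, being due to the Borweins and Borwein--Garvan — the remainder is a line-by-line transcription of the proof of Theorem~\ref{DLpi}, with $4$ replaced by $3$, $\vartheta_3$ replaced by $a$, and $D_N$ replaced by $C_N$.
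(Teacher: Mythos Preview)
Your proposal is correct and is precisely the approach the paper intends: the paper does not write out a separate proof of Theorem~\ref{new-cubic-pi-expr} but simply presents it as the cubic analogue of Theorem~\ref{DLpi}, to be obtained from Theorem~\ref{CCL-main} by the same argument with the cubic data in place of the classical data. For the record, the right normalization is $Z(q)=a^2(q)$ (so that $s=3$ via the transformation $a(e^{-2\pi/\sqrt{3N}})=\sqrt{N}\,a(e^{-2\pi\sqrt{N/3}})$), and then the computation $\tfrac{1}{M_N}q\,dM_N/dq=-2\,a(q)a(q^N)C_N(q)$ together with the chain rule collapses exactly to $a_N=-C_N(e^{-2\pi/\sqrt{3N}})/\sqrt{N}$, as you outlined.
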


\begin{Theorem}\label{CTL} Let
$N\ge 8$ be a positive integer,
\begin{align*}\widehat{\alpha}^\dagger_N&=\alpha^\dagger\Big(-e^{-\pi\sqrt{N/3}}\Big),
\intertext{and}
\widehat{C}_N(q) &= \frac{1}{a^2(-q)a^2(-q^N)}\,
\operatorname{det}\!\left(\begin{matrix} a(-q) & a(-q^N) \\
q\dfrac{da(-q)}{dq} & q\dfrac{da(-q^N)}{dq}\end{matrix}\right).\end{align*}
Then
\begin{equation}\label{newChanLiawTanpi}\sqrt{\frac{3}{N}}\frac{1}{\pi} =
\sum_{k=0}^\infty \frac{\left(\frac{1}{2}\right)_k\left(\frac{1}{3}\right)_k
\left(\frac{2}{3}\right)_k}{(k!)^3}
\left(\widehat{b}^\dagger_N k+\widehat{a}^\dagger_N\right)
\left(\widehat{X}^\dagger_N\right)^k ,\end{equation}
where
\begin{align*}\widehat{b}^\dagger_N&=1-2 \widehat{\alpha}^\dagger_N,\\
\widehat{a}^\dagger_N &=
\frac{\widehat{C}_N(q)}{\sqrt{N}}\bigg|_{q=e^{-\pi/\sqrt{3N}}},\\
\intertext{and}
\widehat{X}^\dagger_N&=4\widehat{\alpha}^\dagger_N
\left(1-\widehat{\alpha}^\dagger_N\right).\end{align*}

\end{Theorem}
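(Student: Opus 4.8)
The plan is to mimic the proof of Theorem~\ref{new-Borweins-pi}, replacing the classical theta parametrization by the Borweins' cubic one and keeping careful track of the sign change $q\mapsto -q$. First I would recall the cubic analogue of \eqref{theta3-X}, namely that $a(q)$ admits an expansion $a(q)=\sum_{k\ge 0}\mathcal A_k\,\big(4\alpha^\dagger(q)(1-\alpha^\dagger(q))\big)^k$ with $\mathcal A_k=\big(\tfrac12\big)_k\big(\tfrac13\big)_k\big(\tfrac23\big)_k/(k!)^3$; this is the cubic-base hypergeometric identity of the Borweins and Chan--Chan--Liu, and it identifies the role of $Z(q)=a(q)$, $X(q)=4\alpha^\dagger(q)(1-\alpha^\dagger(q))$, and (via the cubic analogue of \eqref{dX}) of $U(q)=1-2\alpha^\dagger(q)$ in Theorem~\ref{CCL-main} / Theorem~\ref{new-General}. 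Next I would evaluate $\mathcal Z(q)=a(-q)$: by Jacobi-type identities for $a$ one has a transformation of $\alpha^\dagger$ under $q\mapsto -q$ analogous to \eqref{alpha-minus}, which rewrites $\widehat X^\dagger_N$, $\widehat b^\dagger_N$ and the arising function $C(q)$ in Theorem~\ref{new-General} in terms of $\widehat\alpha^\dagger_N=\alpha^\dagger(-e^{-\pi\sqrt{N/3}})$.

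The second ingredient is the transformation law for $a$ under $\tau\mapsto -1/(3\tau)$, i.e.\ the cubic analogue of \eqref{theta-transform}; this is classical for $a(q)=\sum q^{m^2+mn+n^2}$ (it is the theta function of the hexagonal lattice, whose dual is a rescaled copy of itself) and gives a relation of the shape $a(e^{-2\pi/\sqrt{3N}})=\sqrt{N}\,a(e^{-2\pi\sqrt{N/3}})$, hence $s=3$ in the notation of Theorem~\ref{new-General}, with the factor $C(q)$ produced by the extra $q\mapsto -q$ exactly as $\vartheta_2^4/\vartheta_4^4$ was produced in \eqref{Z-t-1}. Then, just as in the passage \eqref{dM}--\eqref{aN}, I would compute the logarithmic derivative of $\mathcal M_N(q)=a(-q)/a(-q^N)$ and recognise it, up to the factor $-a^2(-q)a^2(-q^N)$, as $\widehat C_N(q)$; combining this with the $q\mapsto -q$ version of the chain rule $q\,dM_N/dq=(dM_N/dX)\,q\,dX/dq$ expresses the first part of $\widehat a^\dagger_N$ in terms of $\widehat C_N(e^{-\pi/\sqrt{3N}})$, while the second part of $\widehat a^\dagger_N$ comes from $dC/dX$ evaluated at the relevant point, exactly as in the last two sentences of the proof of Theorem~\ref{new-Borweins-pi}. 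Substituting $\widehat a^\dagger_N,\widehat b^\dagger_N,\widehat X^\dagger_N$ into Theorem~\ref{new-General} yields \eqref{newChanLiawTanpi}; the hypothesis $N\ge 8$ is what guarantees $|\widehat X^\dagger_N|<1$ so that the series converges.

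The main obstacle I anticipate is bookkeeping rather than anything conceptual: one must verify that the cubic transformation formula for $a$ under $\tau\mapsto -1/(3\tau)$ combined with $q\mapsto -q$ produces precisely $C(q)=\alpha^\dagger(q)/(1-\alpha^\dagger(q))$ (the cubic mirror of \eqref{C-alpha}), and that the sign conventions in $\alpha^\dagger(-q)$ are consistent with the definition $1/\alpha^\dagger(q^2)=1+\tfrac1{27}\eta^{12}(\tau)/\eta^{12}(3\tau)$; a wrong sign here would corrupt $\widehat a^\dagger_N$. The rest is a routine, if lengthy, transcription of the classical-base argument into the cubic base, using the known cubic analogues of the $\eta$-product representations and the cubic modular equation relating $a(q)$, $a(q^3)$ and $\alpha^\dagger$.
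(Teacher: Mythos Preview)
Your overall strategy---mimic the proof of Theorem~\ref{new-Borweins-pi} in the cubic base, feeding the Borweins' expansion of $a^2(q)$ (note: $a^2$, not $a$, is what equals $\sum_k\frac{(1/2)_k(1/3)_k(2/3)_k}{(k!)^3}X^k$) into Theorem~\ref{new-General}---is exactly what the paper has in mind when it introduces Theorems~\ref{new-cubic-pi-expr} and~\ref{CTL} as ``the analogues of Theorems~\ref{DLpi} and~\ref{new-Borweins-pi}''. No separate proof is written out.

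There is, however, a concrete mismatch between your proposal and the statement you are trying to prove. You predict that the transformation for $a(-q)$ will produce a non-trivial factor $C(q)=\alpha^\dagger(q)/(1-\alpha^\dagger(q))$ mirroring \eqref{C-alpha}, and hence that ``the second part of $\widehat a^\dagger_N$ comes from $dC/d\mathcal X$''. But look at the formula for $\widehat a^\dagger_N$ in Theorem~\ref{CTL}: it is simply $\widehat C_N(e^{-\pi/\sqrt{3N}})/\sqrt N$, with \emph{no} additive correction analogous to the $\tfrac{1}{2(1-\alpha_N)}$ in Theorem~\ref{new-Borweins-pi} and \emph{no} multiplicative factor analogous to $\sqrt{\alpha_N/(1-\alpha_N)}$. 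That structural difference is telling you that the cubic $q\mapsto -q$ situation is \emph{not} a straight mirror of the classical one: the relevant $\mathcal Z(q)=a^2(-q)$ in fact satisfies a clean transformation of the type required by Theorem~\ref{CCL-main} (with $s=12$, so that $e^{-2\pi/\sqrt{Ns}}=e^{-\pi/\sqrt{3N}}$), i.e.\ effectively $C(q)\equiv 1$, and Theorem~\ref{new-General} is not needed at all. If you push through with your conjectured $C(q)=\alpha^\dagger/(1-\alpha^\dagger)$ you will arrive at a formula for $\widehat a^\dagger_N$ that disagrees with the theorem. So the correction is: first establish the modular transformation for $a^2(-q)$ directly (this is where the genuine cubic input lies, and it does \emph{not} parallel \eqref{Z-t-1}--\eqref{C-alpha}), observe that no $C$-term arises, and then apply Theorem~\ref{CCL-main}. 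The remainder of your outline---identifying $\mathcal X$, $\mathcal U$, and expressing the logarithmic derivative of $\mathcal M_N$ via $\widehat C_N$---goes through as you describe.
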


We now state a few identities for the cubic case similar to those
in Theorem~\ref{Haupt}.

%
%

\begin{Theorem}\label{cubicHaupt}
The following hold:

\noindent Let
$$H_{\ell}^\dagger=H_{\ell}^\dagger(\tau)= \left(\frac{\eta(3\tau)\eta(3\ell\tau)}
{\eta(\tau)\eta(\ell\tau)}\right)^{\frac{12}{l+1}}.$$
Then
\begin{align}\label{Cubic-C2} C_2(q^2)& =
-6\frac{H_2^\dagger}{(1+9H_2^\dagger)^2},\\
C_5(q^2)&= -6H_5^\dagger \frac{1+4H_5^\dagger +9H_5^{\dagger\,2}}
{(1+9H_5^\dagger+9H_5^{\dagger\,2})^2},\notag\\
C_{11}({ q^2})& =-6H_{11}^\dagger(\tau)\frac{U(H_{11}^\dagger(\tau))}
{V^2(H_{11}^\dagger(\tau))},\notag\end{align}
where
\begin{align*}
U(s)&=1+5s+18s^2+37s^3+54s^4+45s^5+27s^6,\intertext{and}
V(s)&=1+9s+18s^2+27s^3+9s^4.
\end{align*}
\end{Theorem}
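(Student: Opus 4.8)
The plan is to follow the same strategy used to prove Theorem~\ref{Haupt}, transferred to the cubic setting. First I would recall the well-known product representation for the cubic theta function in the form
\[
a(q) = \frac{\eta^3(\tau)}{\eta(3\tau)} + 3\,\frac{\eta^3(3\tau)}{\eta(\tau)},
\]
(equivalently $a(q)$ relates to $\eta(\tau/3),\eta(\tau),\eta(3\tau)$ after the substitution $q\mapsto q^2$), from which, by the $\eta$-transformation formula \eqref{eta-transform}, one deduces the modular weight-$\tfrac12$ behaviour of the relevant theta-like function under the appropriate congruence subgroup. As in the argument for Theorem~\ref{mainthm}, setting $\Phi(\tau) = \frac1{a(q^2)}\frac{d}{d\tau} a(q^2)$ one gets a transformation of the shape $\Phi(U\circ\tau) = \big(\kappa(c,d)/(c\tau+d) + \Phi(\tau)\big)(c\tau+d)^2$ for $U$ in the relevant group, so that $S^\dagger_\ell(\tau) = \ell\Phi(\ell\tau) - \Phi(\tau)$ is a weight-$2$ modular form, and dividing by $\big(a^2(q^2)a^2(q^{2\ell})\big)$ produces a modular \emph{function}; by \eqref{f-eta}-type bookkeeping this quotient is exactly $C_\ell(q^2)$. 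One checks invariance under the extra Atkin--Lehner involution $W_\ell$, so that $C_\ell(q^2)$ is a modular function on $\Gamma_0(3\ell)+W_\ell$ (the cubic analogue of $\Gamma_0(2\ell)+W_\ell$), which has genus $0$ precisely for $\ell = 2,5,11$ — the cases listed.

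Next I would verify that $H^\dagger_\ell(\tau) = \big(\eta(3\tau)\eta(3\ell\tau)/(\eta(\tau)\eta(\ell\tau))\big)^{12/(\ell+1)}$ is a Hauptmodul for $\Gamma_0(3\ell)+W_\ell$ in each of the three cases: one checks via the $\eta$-transformation and standard order-of-vanishing computations at the cusps that $H^\dagger_\ell$ is holomorphic and nonvanishing on the upper half-plane, has a simple zero at one cusp and a simple pole at the other (after accounting for the width), and is fixed by $W_\ell$. Since the genus is $0$ and $H^\dagger_\ell$ has a single simple pole in a fundamental domain, it generates the full function field. Consequently $C_\ell(q^2)$, being a modular function on the same group, is a rational function of $H^\dagger_\ell$.

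To pin down that rational function, I would analyze the divisor of $C_\ell(q^2)$: determine the location and order of its zeros and poles in terms of the cusps and elliptic points, which fixes the numerator and denominator of the rational expression up to a scalar; the denominators $(1+9H^\dagger_2)^2$, $(1+9H^\dagger_5+9H^{\dagger 2}_5)^2$, and $V^2(H^\dagger_{11})$ reflect a double pole coming from the behaviour of the Wronskian at the relevant cusp, and the numerators $H^\dagger_\ell\cdot(\text{polynomial})$ reflect a simple zero at the other cusp together with extra zeros. The remaining normalizing constant ($-6$ in each case) and the polynomial coefficients are then determined by matching the leading $q$-expansion coefficients of both sides — a finite, mechanical check using the product expansions of $a(q)$, $qda/dq$ and $\eta$, easily done with \texttt{qseries}. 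The main obstacle I anticipate is the bookkeeping at the cusps of $\Gamma_0(3\ell)+W_\ell$ for $\ell=11$: correctly computing the cusp widths, the orders of $H^\dagger_{11}$ and of the Wronskian quotient there, and hence the exact degrees of $U$ and $V$, is delicate, and this is where an error would most likely creep in; once the degrees are known, fixing the coefficients is routine series comparison.
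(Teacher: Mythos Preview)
Your proposal is correct and follows essentially the same strategy the paper uses for the classical case (Theorem~\ref{mainthm} followed by Theorem~\ref{Haupt}); the paper does not actually supply a separate proof of Theorem~\ref{cubicHaupt} but simply states the identities as the cubic analogue, so your write-up in fact fills in what the paper leaves implicit. One small caution: the displayed formula $a(q)=\eta^3(\tau)/\eta(3\tau)+3\,\eta^3(3\tau)/\eta(\tau)$ is not quite the right identity (it is $b(q)$ and $c(q)$, not $a(q)$, that admit those $\eta$-quotient forms, and they satisfy $a^3=b^3+c^3$); for the transformation argument it is cleaner to use directly that $a(q^2)$ is a weight-$1$ modular form on $\Gamma_0(3)$ with the quadratic character $\chi_{-3}$, which yields the required transformation law for $\Phi$ without appealing to an $\eta$-product expression for $a$.
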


The examples of \eqref{newcubicpi} which follow from
Theorem~\ref{cubicHaupt} are given as follows:

\subsection{Case $N=2$}
\quad
\medskip

When $N=2$, $\alpha_2^\dagger = \dfrac{\sqrt{2}-1}{2\sqrt{2}}$,
$b^\dagger_2= \dfrac{\sqrt{2}}{2}$ and
$X_2^\dagger = \dfrac{1}{2}.$ 
Using \eqref{Cubic-C2} and the fact that
$$H_2^\dagger(i/\sqrt{6}) =\frac{1}{9},$$
which follows from two instances of \eqref{eta-transform},
we deduce that
$$C_2(e^{-2\pi/\sqrt{6}}) = -\frac{1}{6}.$$

The series for $1/\pi$ in this case is
$$\frac{3\sqrt{3}}{\pi}=\sum_{k=0}^\infty
\frac{\left(\frac{1}{2}\right)_k\left(\frac{1}{3}\right)_k
\left(\frac{2}{3}\right)_k}{(k!)^3}\left(6 k+1\right) \frac{1}{2^k}. $$

For case $N=5$ and $11$, identity \eqref{newcubicpi} holds for the
following values:
\begin{equation*}
b^\dagger_5=\frac{11}{23}\sqrt{5},\qquad a^\dagger_5=\frac{4}{75}\sqrt{5},
\qquad X^\dagger_5=\frac{4}{125}\end{equation*}
and
$$
 b^\dagger_{11}=-\frac{5}{242}\sqrt{11}+\frac{45}{242}\sqrt{33},\quad
 a_{11}^\dagger=-\frac{13}{726}\sqrt{11}+\frac{3}{121}\sqrt{33},\quad
X^\dagger_{11}=-\frac{194}{1331}+\frac{225}{2662}\sqrt{3}.$$

When $N=2$ and $5$,
$$\widehat{X}^\dagger_2 = -256-153\sqrt{3}\quad\text{and}\quad
\widehat{X}^\dagger_5=-4,$$
which have absolute values greater than 1.
This implies that  the right-hand side of \eqref{newChanLiawTanpi} diverges.
In other words, the only identity
 from Theorem \ref{cubicHaupt} that leads to a series for $1/\pi$
via \eqref{newChanLiawTanpi} is when $N=11$ and is
given by
$$
\frac{\sqrt{3}}{\pi}=\sum_{k=0}^\infty
\frac{\left(\frac{1}{2}\right)_k\left(\frac{1}{3}\right)_k
\left(\frac{2}{3}\right)_k}{(k!)^3}
\left(\left(\frac{45}{22}\sqrt{3}+\frac{5}{22}\right)k+\frac{13}{66}+
\frac{3}{11}\sqrt{3}\right)
\left(-\frac{194}{1331}-\frac{225}{2662}\sqrt{3}\right)^k.
$$

We end this section with cubic analogues of Ramanujan's identities
for $f(\ell)$. 
In Table~2, we will state the value of $N$ and in each entry, set
$$\alpha^\dagger=\alpha^\dagger(q),\quad\beta^\dagger = \alpha^\dagger(q^N).$$

\bigskip
\begin{center}
\textbf{Table 2: Table of identities for $C_N(q)$}\end{center}

\smallskip
{\footnotesize
\begin{center}
\noindent\makebox[\linewidth]{\rule{14.5 cm}{0.4pt}}
\end{center}

\begin{center} {$N=2$}\end{center}

\medskip%

\noindent
Let $$X=\dfrac{\left(\alpha^\dagger\beta^\dagger(1-\alpha^\dagger)
(1-\beta^\dagger)\right)^{1/3}}{9},$$ then
$$C_2(q)= -6X.$$

\begin{center}
\noindent\makebox[\linewidth]{\rule{14.5 cm}{0.4pt}}
\end{center}

\begin{center} {$N=5$}\end{center}

\noindent
Let $$X=\dfrac{\left(\alpha^\dagger\beta^\dagger(1-\alpha^\dagger)
(1-\beta^\dagger)\right)^{1/6}}{3},$$ then
$$C_5(q)=-6X(1-5X).$$

\begin{center}
\noindent\makebox[\linewidth]{\rule{14.5 cm}{0.4pt}}
\end{center}

\begin{center} {$N=11$}\end{center}

\noindent
Let
\begin{align*} X&=\frac{\left(\alpha^\dagger\beta^\dagger(1-\alpha^\dagger)
(1-\beta^\dagger)\right)^{1/6}}{3},\intertext{and}
Y&=\frac{1-\left(\alpha^\dagger\beta^\dagger\right)^{1/3}-
\left((1-\alpha^\dagger)(1-\beta^\dagger)\right)^{1/3}}{9},
\end{align*}
  then
$$C_{11}(q)=-33XY+3X-6Y+33Y^2.$$

\begin{center}
\noindent\makebox[\linewidth]{\rule{14.5 cm}{0.4pt}}
\end{center}

\begin{center} {$N=17$}\end{center}
\medskip
\noindent Let \begin{align*}
X&=\frac{\left(\alpha^\dagger\beta^\dagger(1-\alpha^\dagger)
(1-\beta^\dagger)\right)^{1/6}}{3},\intertext{and}
Y&=\frac{1-\left(\alpha^\dagger\beta^\dagger\right)^{1/3}-
\left((1-\alpha^\dagger)(1-\beta^\dagger)\right)^{1/3}}{9},
\end{align*}
then
$$C_{17}(q)=
\frac{6\left(-2X^2+34X^2Y+51XY^2-14Y^2-9XY+Y+51Y^3\right)}{8Y-1}.$$
}
\begin{center}
\noindent\makebox[\linewidth]{\rule{14.5 cm}{0.4pt}}
\end{center}

\begin{Remark}{\rm
In the above table, we give an identity for $\ell=17$ to illustrate
the fact that we can compute $C_\ell(q)$ even when
the genus of $\Gamma_0(3\ell)+W_\ell$ is not zero.
Applying Theorem \ref{CTL}, together with the identity given above
for $C_{17}(q)$ and the value
$$\widehat{\alpha}^\dagger_{17}=\frac{1}{2}-\frac{\sqrt{17}}{8},$$
we obtain the series
$$\frac{12\sqrt{3}}{\pi}=\sum_{k=0}^\infty
\frac{\left(\frac{1}{2}\right)_k\left(\frac{1}{3}\right)_k
\left(\frac{2}{3}\right)_k}{(k!)^3}\left(51k+7\right)
\left(\frac{-1}{16}\right)^k,$$
which was discovered by Chan, Liaw and Tan~\cite[(1.15)]{Chan-Liaw-Tan-pi}.
}
\end{Remark}

\section{Quartic theory and Ramanujan's most famous series
for $1/\pi$}

In 1985, B.~Gosper brought Ramanujan's series for $1/\pi$ to the
attention of the mathematical community by computing 17526200 digits of
$\pi$ using the series
\begin{equation}\label{Ram-quartic-29}
\frac{1}{\pi}=2\sqrt{2}\sum_{k=0}^\infty \frac{\left(\frac{1}{2}\right)_k
\left(\frac{1}{4}\right)_k\left(\frac{3}{4}\right)_k}{(1)_k^3}
\left(1103+26390k\right)\left(\frac{1}{99^2}\right)^{2k+1}
\end{equation}
(see \cite[p.~387 and p.~685]{PiSource}).
Series \eqref{Ram-quartic-29} was discussed in the book by the Borweins
(see \cite[(5.5.23)]{PiandAGM}), where they
remarked that they computed $\alpha(58)$ (see \cite[(5.1.2)]{PiandAGM})
by calculating a certain number $d_0(58)$ (see
\cite[(5.5.16)]{PiandAGM}) to high precision.
In other words, it appears that a rigorous proof has not
been found for \eqref{Ram-quartic-29}.

In this section, we will give a proof of \eqref{Ram-quartic-29}.
Identity \eqref{Ram-quartic-29} belongs to the quartic theory
(cf.\ \cite{Berndt-Chan-Liaw})
and a quartic analogue of Theorem~\ref{DLpi} is
given by the following Theorem:

\begin{Theorem}\label{new-quartic-pi-2}
Let $$A^2(q^2) = \frac{\eta^{16}(\tau)}{\eta^8(2\tau)}
\left(1+32\frac{\eta^8(4\tau)}{\eta^8(\tau)}\right)^2
=\frac{\eta^{16}(\tau)}{\eta^8(2\tau)}\left(1+64\frac{\eta^{24}(2\tau)}
{\eta^{24}(\tau)}\right),$$
and
$$\frac{1}{\alpha^\perp(q^2)}=1+\frac{1}{64}\frac{\eta^{24}(\tau)}
{\eta^{24}(2\tau)}.$$ Let
$$\alpha_N^\perp =\alpha^\perp\Big(e^{-\pi\sqrt{2N}}\Big),$$
and
$$D^{\perp}_N(q) = \frac{1}{\sqrt{A^{3}(q)A^{3}(q^N)}}\,
\operatorname{det}\!\left(\begin{matrix} A(q) & A(q^N) \\
q\dfrac{dA(q)}{dq} & q\dfrac{dA(q^N)}{dq}\end{matrix}\right).$$
Then
\begin{equation*}
\sqrt{\frac{2}{N}}\frac{1}{2\pi} =
\sum_{k=0}^\infty \frac{\left(\frac{1}{2}\right)_k\left(\frac{1}{4}\right)_k
\left(\frac{3}{4}\right)_k}{(k!)^3}\left(b^\perp_N k+a^\perp_N\right)
\left(X^\perp_N\right)^k, \end{equation*}
where
\begin{align*}
b^\perp_N&=1-2\alpha_N^\perp,\\
a^\perp_N &= -\frac{D^\perp_N(q)}{2\sqrt{N}}\bigg|_{q=e^{-2\pi/\sqrt{2N}}},\\
\intertext{and} X^\perp_N&=4\alpha_N^{\perp}
\left(1-\alpha_N^{\perp}\right),\end{align*}
\end{Theorem}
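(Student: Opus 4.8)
The plan is to reduce Theorem~\ref{new-quartic-pi-2} to the
general framework of Theorem~\ref{CCL-main}, exactly as was done for
Theorem~\ref{DLpi}, with the quartic triple of functions playing the roles of
$Z$, $X$ and $U$. Concretely, I would set $\mathcal Z(q)=A^2(q)$,
$\mathcal X(q)=4\alpha^\perp(q)(1-\alpha^\perp(q))$, and aim to identify
$\mathcal U(q)=1-2\alpha^\perp(q)$. First I would verify the power‑series
expansion
\[
A^2(q)=\sum_{k=0}^\infty
\frac{\left(\frac12\right)_k\left(\frac14\right)_k\left(\frac34\right)_k}
{(k!)^3}\bigl(4\alpha^\perp(q)(1-\alpha^\perp(q))\bigr)^k,
\]
which is the quartic analogue of \eqref{theta3-X}; this is the standard
identification of $A^2$ with a ${}_2F_1$ in the quartic base and is available in
the literature on the quartic theory of elliptic functions
(cf.\ \cite{Berndt-Chan-Liaw}). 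This gives
$A_k=\left(\frac12\right)_k\left(\frac14\right)_k\left(\frac34\right)_k/(k!)^3$
and $X_N^\perp=4\alpha_N^\perp(1-\alpha_N^\perp)$.

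Next I would establish the two structural ingredients. The transformation law:
using $\eta(-1/\tau)=\sqrt{-i\tau}\,\eta(\tau)$ in the eta‑product formula for
$A^2(q^2)$, together with the definition of $\alpha^\perp$, one checks that
$A^2\!\left(e^{-\pi/\sqrt{N}}\right)$ equals a constant times
$N^{?}A^2\!\left(e^{-\pi\sqrt{N}}\right)$; tracking the powers of $\eta$ shows
the multiplier is $N$ up to the appropriate normalization, which pins down the
value $s=2$ appearing in the statement (so that $\sqrt{s/N}/(2\pi)$ becomes
$\sqrt{2/N}/(2\pi)$). The logarithmic‑derivative law
\[
q\frac{dX(q)}{dq}=\bigl(1-2\alpha^\perp(q)\bigr)X(q)\,A^2(q)
\]
is the quartic analogue of \eqref{dX}; I would derive it either from the known
Ramanujan‑type differential equations in the quartic theory or, more directly,
by differentiating the eta‑product expressions for $A^2$ and $\alpha^\perp$ and
simplifying. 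This yields $U(q)=1-2\alpha^\perp(q)$ and hence
$b_N^\perp=1-2\alpha_N^\perp$.

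Finally, to obtain $a_N^\perp$ I would repeat the Wronskian computation from the
proof of Theorem~\ref{DLpi}: with $M_N(q)=A^2(q)/A^2(q^N)$ one has the identity
\[
\frac1{M_N(q)}\,q\frac{dM_N(q)}{dq}
=-\sqrt{A^3(q)A^3(q^N)}\;\frac{D_N^\perp(q)}{A(q)A(q^N)}\cdot(\text{const}),
\]
more precisely
$\dfrac1{M_N}q\dfrac{dM_N}{dq}=-c\,A(q)A(q^N)\,D_N^\perp(q)$ for an explicit
constant $c$ coming from the normalization $\sqrt{A^3A^3}$ in the definition of
$D_N^\perp$. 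Combining this with the chain rule
$q\,dM_N/dq=(dM_N/dX)\,q\,dX/dq$ and the differential equation above, then
evaluating at $q=e^{-\pi/\sqrt{N}}$ and using the transformation law for $A^2$,
gives $a_N^\perp=-D_N^\perp\!\left(e^{-2\pi/\sqrt{2N}}\right)/(2\sqrt N)$ after
matching the variable substitution $q\mapsto q^2$ implicit in the $A(q^2)$
definition. Plugging $a_N^\perp$, $b_N^\perp$, $X_N^\perp$ into
Theorem~\ref{CCL-main} finishes the proof.

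\emph{Main obstacle.} The delicate point is the bookkeeping of normalizing
constants and the change of variable $q\leftrightarrow q^2$: $A$ is defined via
$A(q^2)$, $\alpha^\perp$ via $\alpha^\perp(q^2)$, and $D_N^\perp$ carries the
awkward factor $\sqrt{A^3(q)A^3(q^N)}$ rather than a pure power of $A$. Getting
the exact constant $c$ in the logarithmic‑derivative identity for $M_N$, and
hence the precise factor $1/(2\sqrt N)$ in $a_N^\perp$ and the $\sqrt{2/N}$ on
the left, requires careful tracking of these; the rest is a routine transcription
of the proof of Theorem~\ref{DLpi}. A secondary point, if the quartic‑base
hypergeometric expansion of $A^2$ is not taken as known, is to supply that
identity, but it is classical.
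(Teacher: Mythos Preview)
Your overall strategy is exactly the one the paper has in mind: Theorem~\ref{new-quartic-pi-2} is stated without a separate proof because it is meant to follow from Theorem~\ref{CCL-main} in precisely the same way that Theorem~\ref{DLpi} does, with the quartic data replacing the classical data. So the plan ``choose $Z$, $X$, $U$, verify the hypergeometric expansion, the transformation law, and the differential equation, then read off $a_N$ from the Wronskian'' is correct.

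There is, however, one concrete error in your identification: you take $\mathcal Z(q)=A^2(q)$, but the correct choice is $\mathcal Z(q)=A(q)$. Three independent checks show this. First, the weight: from $A^2(q^2)=\eta^{16}(\tau)/\eta^8(2\tau)\cdot(1+\cdots)$ one sees that $A^2$ has weight~$4$, so $A$ has weight~$2$; in the framework of Theorem~\ref{CCL-main} the relation $q\,dX/dq=UXZ$ forces $Z$ to have weight~$2$, hence $Z=A$. Second, the power series: expanding to first order in $Q=q^2$ one finds $A(q^2)=1+24Q+\cdots$ and $4\alpha^\perp(q^2)(1-\alpha^\perp(q^2))=256Q+\cdots$, while $\sum_k\frac{(1/2)_k(1/4)_k(3/4)_k}{(k!)^3}X^k=1+\tfrac{3}{32}X+\cdots$; these agree for $Z=A$ but not for $Z=A^2$. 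Third, the final constant: with $Z=A$ one has $M_N=A(q)/A(q^N)$ and
\[
\frac{1}{M_N}\,q\frac{dM_N}{dq}=-\sqrt{A(q)A(q^N)}\,D_N^\perp(q),
\]
so that, after using $A(e^{-2\pi/\sqrt{2N}})=N\,A(e^{-\pi\sqrt{2N}})$ (the $s=2$ transformation law), the computation parallel to \eqref{dM}--\eqref{aN} gives
\[
a_N=\frac{UX}{2N}\frac{dM_N}{dX}\bigg|_{q=e^{-2\pi/\sqrt{2N}}}
=-\frac{1}{2\sqrt N}\,D_N^\perp\!\left(e^{-2\pi/\sqrt{2N}}\right),
\]
exactly matching $a_N^\perp$. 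With $Z=A^2$ an extra factor of~$2$ appears in the logarithmic derivative and the transformation becomes $A^2\mapsto N^2A^2$, so neither the value of $s$ nor the coefficient in $a_N^\perp$ comes out right.

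Once you replace $A^2$ by $A$ everywhere, the rest of your outline (including the bookkeeping you flag as the main obstacle) goes through verbatim and yields the statement.
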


In order to derive series for $1/\pi$ using Theorem \eqref{new-quartic-pi-2},
it appears that we need to construct
formulas analogous to those for $D_N(q)$ given in Table~1 in Section~5
for the function $D_N^\perp(q).$
Fortunately, this turns out to be unnecessary. We will show that the
knowledge of $D_N(q)$ is all we need
in order to compute $D_N^\perp(q)$. We begin with
our discussion with the following Theorem:

\begin{Theorem}\label{Quartic-Classic-main} Let $Z(q)=\vartheta_3^4(q)$. Then
\begin{align}\label{Quartic-Classical} -D^\perp_\ell(q)
=\sqrt{\frac{Z(q)Z(q^\ell)}{A(q)A(q^\ell)}}
\bigg(&\frac{1}{1+\alpha(q)}\sqrt{\frac{Z(q)}{Z(q^\ell)}}
\alpha(q)(1-\alpha(q))\\
&-\frac{\ell}{1+\alpha(q^\ell)}\sqrt{\frac{Z(q^\ell)}{Z(q)}}
\alpha(q^\ell)(1-\alpha(q^\ell))-4D_\ell(q)\bigg).\notag\end{align}
\end{Theorem}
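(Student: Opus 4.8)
The plan is to express everything on both sides in terms of logarithmic derivatives of eta-functions and reduce the identity to a relation among the functions $q\,\frac{d}{dq}\log\eta(m\tau)$ for the relevant multiples $m$. First I would rewrite $D^\perp_\ell(q)$: using the definition
$$D^\perp_\ell(q)=\frac{1}{\sqrt{A^3(q)A^3(q^\ell)}}
\operatorname{det}\!\begin{pmatrix} A(q) & A(q^\ell)\\ q\frac{dA(q)}{dq} & q\frac{dA(q^\ell)}{dq}\end{pmatrix}
=\frac{1}{\sqrt{A(q)A(q^\ell)}}\left(\frac{q\frac{dA(q)}{dq}}{A(q)}-\frac{q\frac{dA(q^\ell)}{dq}}{A(q^\ell)}\right),$$
so that $-D^\perp_\ell(q)\sqrt{A(q)A(q^\ell)}$ is the difference of two logarithmic derivatives $\mathcal{L}[A](q^\ell)-\mathcal{L}[A](q)$, where $\mathcal{L}[F](q):=q\frac{d}{dq}\log F(q)$. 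Similarly $-D_\ell(q)\,\vartheta_3(q)\vartheta_3(q^\ell)=\mathcal{L}[\vartheta_3](q^\ell)-\mathcal{L}[\vartheta_3](q)$ after accounting for the $\vartheta_3^3$ normalizations (exactly as in \eqref{f-eta}). The same bookkeeping applies to the $Z=\vartheta_3^4$ prefactors on the right-hand side, which are there precisely to convert the weight-$4$ objects $Z$ into the weight-$2$ logarithmic-derivative world; thus multiplying \eqref{Quartic-Classical} through by $\sqrt{A(q)A(q^\ell)}$ and cancelling, the claim becomes a purely additive identity in $\mathcal{L}[\cdot]$ evaluated at $q$ and $q^\ell$.

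Next I would use the product formula for $A(q^2)$ given in the statement of Theorem~\ref{new-quartic-pi-2}, namely $A^2(q^2)=\frac{\eta^{16}(\tau)}{\eta^8(2\tau)}\bigl(1+64\frac{\eta^{24}(2\tau)}{\eta^{24}(\tau)}\bigr)$, together with Jacobi's identity \eqref{Jacobi2} and the eta-quotient expressions for $\vartheta_j$ in \eqref{T2-prod-1}, \eqref{T3-prod}, \eqref{T4-prod-1}, to write $A$ in terms of $\vartheta_3$ and $\alpha$. In fact one expects $A(q^2)$ to be essentially $\vartheta_3^2(q)\bigl(1+\alpha(q)\bigr)$ up to a known power of $q$ and constant, because $1+64\eta^{24}(2\tau)/\eta^{24}(\tau)$ should match $1+\alpha$ via \eqref{C-alpha}/\eqref{gnmod}-type manipulations and the Jacobi relation \eqref{Jacobi1}. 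Establishing $A(q^2)=\vartheta_3^2(q)(1+\alpha(q))$ (or the correct normalization thereof) is the crucial bridge: once it holds, $\mathcal{L}[A](q^2)=2\mathcal{L}[\vartheta_3](q)+\mathcal{L}[1+\alpha](q)$, and $\mathcal{L}[1+\alpha](q)=\frac{q\,d\alpha/dq}{1+\alpha}$. Then $q\,d\alpha/dq$ is supplied by \eqref{dX} (equivalently \eqref{dX-UXZ} with $Z=\vartheta_3^4$), giving $q\frac{d\alpha}{dq}=\vartheta_3^4(q)\,\alpha(q)(1-\alpha(q))$; hence $\mathcal{L}[1+\alpha](q)=\frac{\vartheta_3^4(q)\,\alpha(q)(1-\alpha(q))}{1+\alpha(q)}$, which is exactly the $\frac{1}{1+\alpha(q)}\sqrt{Z(q)/Z(q^\ell)}\,\alpha(q)(1-\alpha(q))$ term after restoring the $Z$-prefactors (and similarly for the $q^\ell$ term with its factor of $\ell$ coming from $q\frac{d}{dq}$ acting on functions of $q^\ell$).

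Assembling these, the left side $\mathcal{L}[A](q^{2\ell})-\mathcal{L}[A](q^2)$ becomes $2\bigl(\mathcal{L}[\vartheta_3](q^\ell)-\mathcal{L}[\vartheta_3](q)\bigr)+\mathcal{L}[1+\alpha](q^\ell)-\mathcal{L}[1+\alpha](q)$, and the first bracket is $-2\,\vartheta_3(q)\vartheta_3(q^\ell)\,\vartheta_3^2(q)\vartheta_3^2(q^\ell)\cdot\frac{D_\ell(q)}{\vartheta_3(q)\vartheta_3(q^\ell)}$-type expression matching the $-4D_\ell(q)$ term on the right (the factor $4$ versus $2$ being absorbed by the $\sqrt{Z(q)Z(q^\ell)}$ normalization and the relation between $D_\ell$'s $\vartheta_3^3$ scaling and $\vartheta_3$); careful tracking of these constants is needed but routine. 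The main obstacle I anticipate is precisely pinning down the identity $A(q^2)=\vartheta_3^2(q)(1+\alpha(q))$ with the exact power of $q$ and multiplicative constant — this requires combining \eqref{Jacobi1}, \eqref{Jacobi2}, and the eta-quotient dictionary carefully, and an error in the $q$-power there would propagate (though harmlessly, since it contributes only to $\mathcal{L}$ via an additive constant and $q\frac{d}{dq}q^c=c$, which one must check cancels). A secondary nuisance is the consistent treatment of the $\tau\mapsto\ell\tau$ versus $q\mapsto q^\ell$ chain rule, which introduces the factors of $\ell$ seen in the middle term of \eqref{Quartic-Classical}; I would handle this by writing everything as $q\frac{d}{dq}$ from the outset and noting $q\frac{d}{dq}F(q^\ell)=\ell\,(q^\ell\frac{d}{dq^\ell}F)(q^\ell)$.
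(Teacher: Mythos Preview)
Your strategy is the same as the paper's: write $D^\perp_\ell$ and $D_\ell$ as differences of logarithmic derivatives, connect $A$ to $\vartheta_3$ and $\alpha$ by a single algebraic identity, then use $q\,d\alpha/dq=Z(q)\alpha(1-\alpha)$ to produce the two $\alpha(1-\alpha)/(1+\alpha)$ terms. The paper carries this out in three lines via logarithmic differentiation of $A(q)/A(q^\ell)$.

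The one real problem is your guessed bridge identity. The correct relation (the paper's \eqref{A-Z}) is
\[
A(q)=\bigl(1+\alpha(q)\bigr)Z(q)=\bigl(1+\alpha(q)\bigr)\vartheta_3^4(q),
\]
not $A(q^2)=\vartheta_3^2(q)(1+\alpha(q))$. This is not a discrepancy of a constant or a stray power of $q$: the variable is $q$ (not $q^2$) and the theta power is $4$ (not $2$). That power of $4$ is exactly what generates the coefficient $-4$ in front of $D_\ell(q)$, since $\mathcal L[A]=\mathcal L[1+\alpha]+4\,\mathcal L[\vartheta_3]$ and
\[
\mathcal L[\vartheta_3](q^\ell)-\mathcal L[\vartheta_3](q)
=D_\ell(q)\,\vartheta_3^2(q)\vartheta_3^2(q^\ell)=D_\ell(q)\sqrt{Z(q)Z(q^\ell)}.
\]
With your version you would get a factor $2$ and an irreparable $q$ versus $q^2$ mismatch. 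Once you use the correct \eqref{A-Z}, the detour through eta-quotients is unnecessary: take logs of $A(q)/A(q^\ell)=\dfrac{(1+\alpha(q))Z(q)}{(1+\alpha(q^\ell))Z(q^\ell)}$, apply $q\,d/dq$, and the three terms on the right of \eqref{Quartic-Classical} appear immediately.

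Two smaller points to clean up. First, your determinant expansion has the sign reversed: one has $D^\perp_\ell(q)\sqrt{A(q)A(q^\ell)}=\mathcal L[A](q^\ell)-\mathcal L[A](q)$, not the other way round; the same holds for $D_\ell$. Second, the identity \eqref{A-Z} is justified in the paper simply by noting that $A(q^2)/Z(q^2)$ is a modular function on $\Gamma_0(4)$, so there is no need to chase constants through \eqref{Jacobi1} and \eqref{gnmod}.
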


\begin{proof}
The proof of \eqref{Quartic-Classical} follows from the identity
\begin{equation}\label{A-Z} A(q)=\left(1+\alpha(q)\right)Z(q),\end{equation}
which follows by observing that $A(q^2)/Z(q^2)$ is a modular
function on $\Gamma_0(4)$.
Using \eqref{A-Z}, we deduce that
\begin{equation}\label{A-Z-2}\frac{A(q)}{A(q^\ell)}=
\frac{1+\alpha(q)}{1+\alpha(q^\ell)}\frac{Z(q)}{Z(q^\ell)}.\end{equation}
Logarithmically differentiating \eqref{A-Z-2}, identifying the
resulting expressions with $D_\ell(q)$ and $D^\perp_\ell(q)$, and using
the identity
$$q\dfrac{d\alpha(q)}{dq}=Z(q)\alpha(1-\alpha),$$
we complete the proof of \eqref{Quartic-Classical}.
\end{proof}

Identity \eqref{Quartic-Classical} and Theorem \ref{new-quartic-pi-2}
allow us to
derive any series for $1/\pi$ for a positive integer $N$
from identities for $D_N(q)$ given in Table 1. For example,
when $N=3$, we find, using the identity for $D_3(q)$ given in Section 4, that
\begin{align*}
&\alpha_6=35+24\sqrt{2}-20\sqrt{3}-14\sqrt{6},\\
&\alpha_{2/3}=35-24\sqrt{2}-20\sqrt{3}+14\sqrt{6},\\
&D_3(e^{-\pi\sqrt{2/3}}) = \frac{5}{2}-\frac{3\sqrt{3}}{2},\\
&\sqrt{\frac{Z(e^{-\pi\sqrt{2/3}})}{Z(e^{-\pi\sqrt{6}})}} =
3-2\sqrt{3}+3\sqrt{2}-\sqrt{6},\\
&\sqrt{\frac{Z(e^{-\pi\sqrt{2/3}})Z(e^{-\pi\sqrt{6}})}
{A(e^{-\pi\sqrt{2/3}})A(e^{-\pi\sqrt{6}})}}
=\frac{1}{\sqrt{6}}+\frac{\sqrt{2}}{4}.\end{align*}
This yields
$$-D_3^\perp(e^{-\pi\sqrt{2/3}}) = \frac{1}{\sqrt{6}}\quad\text{and}
\quad a^\perp_6=\frac{\sqrt{2}}{12},$$ and
we deduce the series
$$\frac{1}{\sqrt{6}\pi} =\sum_{k=0}^\infty
\frac{\left(\frac{1}{2}\right)_k\left(\frac{1}{4}\right)_k
\left(\frac{3}{4}\right)_k}{(k!)^3}\left(\frac{2\sqrt{2}}{3} k+
\frac{\sqrt{2}}{12}\right)
\frac{1}{9^k}.$$

Similarly, when $N=29$, we find, using the modular equation for $D_{29}(q)$
derived in Section 4, that
\begin{align*}
&\alpha_{58}=384238403+71351280\sqrt{29}-50452974\sqrt{58}-271697580\sqrt{2},\\
&\alpha_{2/29}=384238403-71351280\sqrt{29}-50452974\sqrt{58}+271697580\sqrt{2},\\
&D_{29}(e^{-\pi\sqrt{2/29}}) = 6351\sqrt{29}-24184\sqrt{2},\\
&\sqrt{\frac{Z(e^{-\pi\sqrt{2/29}})}{Z(e^{-\pi\sqrt{58}})}} =
37323+6930\sqrt{29}-26390\sqrt{2}-4900\sqrt{58},\\
&\sqrt{\frac{Z(e^{-\pi\sqrt{2/29}})Z(e^{-\pi\sqrt{58}})}
{A(e^{-\pi\sqrt{2/29}})A(e^{-\pi\sqrt{58}})}}
=\frac{13}{198}\sqrt{29}+\frac{1}{4}\sqrt{2}.\end{align*}
This yields
$$-D_{29}^\perp(e^{-\pi\sqrt{2/29}}) = \frac{4412}{9801}\quad\text{and}
\quad a^\perp_{29}=\frac{2206\sqrt{2}}{284229}.$$
Together with $$b^\perp_{29}=\frac{1820}{9081}\sqrt{29}\quad \text{and}\quad
X^\perp_{29} = \frac{1}{99^4},$$ we complete the proof of
Ramanujan's series \eqref{Ram-quartic-29}.

\begin{Remark}{\rm
We were made aware that an unpublished proof of Ramanujan's series
\eqref{Ram-quartic-29} was discovered around 2015 by
Yue Zhao \cite{ZhaoY}, a young Electrical Engineering student from
Tsinghua University.
Shaun Cooper also discovered another proof of \eqref{Ram-quartic-29}
shortly after
the discovery of our proof.

Zhao also gave a first proof of Ramanujan's series \cite[p.\ 187]{PiandAGM}
$$\frac{4}{\pi}=\sum_{k=0}^\infty (-1)^k \frac{\left(\frac{1}{2}\right)_k
\left(\frac{1}{4}\right)_k\left(\frac{3}{4}\right)_k}{(1)_k^3}
\left(1123+21460k\right)\left(\frac{1}{882}\right)^{2k+1},$$
which corresponds to $N=37$. A proof of the above identity using
the method illustrated here would require an identity associated with
$D_{37}(q)$
which is not present in this article.

}

\end{Remark}

\bigskip

\noindent {\it Acknowledgements.}
We would like to thank Professor Bruce C.~Berndt for his detailed
comments and Liuquan Wang for uncovering several misprints in an earlier version
of this article. The second author would like to thank Professor C.~Krattenthaler for
his hospitality and for providing an excellent research environment during his stay
at the Faculty of Mathematics, University of Vienna. We would also like to thank
Professor C.B.~Zhu for showing the second author a picture of the series
\eqref{Ram-quartic-29} painted on the wall at a train station near EPFL, Switzerland.
This picture motivated us to examine the series which eventually led to the proof
of \eqref{Ram-quartic-29} presented in the last section of this article.
At a recent Pan Asia Number Theory conference in Singapore, Professor
E.~Bayer informed the second author that the formula on the wall was painted
by a group of students at EPFL. They were looking for beautiful formulas
for the wall and painted Ramanujan's series for $1/\pi$ used by Gosper at the
suggestion of Professor M.~Philippe.
Finally, it gives us great pleasure to thank our two referees for giving valuable
suggestions which significantly improved the presentation of our work.

 \end{document}